\documentclass{amsart}
\usepackage{epsfig}
\usepackage{amsmath}
\usepackage{amssymb}
\usepackage{amscd}
\usepackage{graphicx}
\usepackage{xcolor}
\usepackage{float}
\usepackage{verbatim}
\usepackage{bbm}
\usepackage{cancel}
\usepackage{enumerate}
\usepackage{hyperref}

\newtheorem{thm}{Theorem}[section]
\newtheorem{lem}[thm]{Lemma}

\newtheorem{defn}[thm]{Definition}

\theoremstyle{remark}

\def \N {\mathbb N}

\def \Z {\mathbb Z}
\def \R {\mathcal R}

\def \A {\mathcal A}
\def \B {\mathcal B}
\def \C {\mathcal C}
\def \G {\mathcal G}
\def \M {\mathcal M}

\def \U {\mathcal U}
\def \P {\mathcal P}

\def \eps {\varepsilon}

\def \sq {sequence}

\def \tl {topological}
\def \im {invariant measure}
\def \inv {invariant}

\def \dist{\mathsf{dist}}
\def \emp{\mathsf{emp}}

\author{Tomasz\ Downarowicz, Jean-Paul Thouvenot and Benjamin Weiss}
\address{Faculty of Pure and Applied Mathematics, Wroc\l aw University of Science and Technology, Wroc\l aw, Poland}\email{downar@pwr.edu.pl}
\address{Laboratoire de Probabilit\'es, Universit\'e Statistique et Mod\'elisation, Sorbonne Universit\'e, 4 Place Jussieu, 75252 Paris Cedex 05, France}\email{jean-paul.thouvenot@upmc.fr}
\address{Mathematics Institute, Hebrew University of Jerusalem, Jerusalem, Israel}\email{weiss@math.huji.ac.il}

\title{On a theorem of Dan Rudolph: Part II: Amenable groups}

\begin{document}

\begin{abstract}We prove an analog of Rudolph's theorem for actions of countable amenable groups, which asserts that among \im s with entropy at least $c$ on the $G$-shift $(\Lambda^G,\sigma)$, a typical measure has entropy $c$ and is Bernoulli. We also address a relative version of this theorem.  
\end{abstract}

\thanks{The research of the first author was supported
by the NCN (Polish National Science Center) Grant No. 2022/47/B/ST1/02866.}

\subjclass[2020]{Primary 37A05, 37A35, 43A07; Secondary 37A25, 37A15}
\keywords{Amenable group action, entropy, generic properties, Bernoulli systems, relative Bernoulli, very weak Bernoulli, orbit equivalence}

\maketitle

\section{Introduction}
In part I of this paper the second and third authors gave a proof of the following theorem that Dan Rudolph presented in the Dynamics Seminar at the University of Maryland 
on Seprtember 11, 2003:

\begin{thm} For a finite set $A$ let $X = A^\Z$ and denote by $S$ the shift.  For $0<c\le\log|A|$  denote by $\M_c$ the set of $S$-invariant measures with entropy greater than or equal to $c$. There is a dense $G_\delta$ subset $C\subset  \M_c$ such that for all $\mu\in C$ the system $(X,\mu,S)$ has entropy equal to $c$ and is isomorphic to a Bernoulli shift.
\end{thm}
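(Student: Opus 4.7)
The plan is to realize $C$ as the intersection of two dense $G_\delta$ subsets of $\M_c$: the set $C_1=\{\mu\in\M_c:h(\mu)=c\}$ of measures of entropy exactly $c$, and the set $C_2=\{\mu\in\M_c:(X,\mu,S)\text{ is Bernoulli}\}$. Since the shift $S$ on $A^{\Z}$ is expansive, the Kolmogorov--Sinai entropy function $h$ is upper semicontinuous on the space of $S$-invariant measures, so $\M_c$ is weak-$\ast$ closed and hence a compact metrizable Baire space. The conclusion will then follow from the Baire category theorem once $C_1$ and $C_2$ are shown to be dense and $G_\delta$ in $\M_c$.

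The $G_\delta$ properties are the softer part of the argument. For $C_1$, upper semicontinuity yields $\{h\le c\}=\bigcap_{n\ge 1}\{h<c+1/n\}$, so $C_1=\M_c\cap\{h\le c\}$ is a countable intersection of relatively open sets. For $C_2$, I would invoke Ornstein's characterization of Bernoullicity by the \emph{finitely determined} property: $\mu$ is Bernoulli if and only if for every $\epsilon>0$ there exist $n\in\N$ and $\delta>0$ such that every invariant measure whose $n$-block marginals are within $\delta$ of those of $\mu$, and whose entropy is within $\delta$ of $h(\mu)$, lies within $\bar d$-distance $\epsilon$ of $\mu$. Restricted to the slice $C_1$, where entropy is pinned to $c$, this presents $C_2\cap C_1$ as a countable intersection (over $\epsilon=1/k$) of weak-$\ast$ open conditions.

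The main substance, and the principal obstacle, is the weak-$\ast$ density of Bernoulli measures of entropy exactly $c$ inside $\M_c$. Given $\mu\in\M_c$ and a weak-$\ast$ neighborhood $U$ of $\mu$ (encoded by a tolerance on $n$-block marginals), the construction I would attempt runs as follows. Pick $N\gg n$ and, using the Shannon--McMillan theorem, choose a subset $T\subset A^N$ of cardinality approximately $e^{Nc}$ among the $\mu$-typical $N$-blocks in such a way that the empirical $n$-block statistics of $T$ (averaged over offsets within the block) match those of $\mu$; then form the i.i.d.\ measure on $T^{\Z}\hookrightarrow A^{\Z}$ and symmetrize over a uniformly random shift offset in $\{0,1,\dots,N-1\}$ to obtain an $S$-invariant measure $\nu\in U$ of entropy $c$. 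The delicate point is that this $\nu$ is Bernoulli only under $S^N$ and is merely a rotation extension of a Bernoulli under $S$; upgrading it to a measure genuinely Bernoulli under $S$ calls on the full strength of Ornstein's isomorphism machinery, in particular the $\bar d$-closedness of the class of Bernoulli processes and the possibility of refining Markov approximations toward finitely determined targets. Passage from the ergodic case to general $\mu\in\M_c$ is accomplished by a finite truncation of the ergodic decomposition together with a small perturbation of its weights to restore total entropy exactly $c$.
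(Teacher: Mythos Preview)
Your overall architecture --- realize the target set as $C_1\cap C_2$ with $C_1=\{h=c\}$ and $C_2=\{\text{Bernoulli}\}$, then invoke Baire --- matches the paper's. The treatment of $C_1$ via upper semicontinuity is correct and is exactly what the paper does. The two substantive steps, however, both contain genuine gaps.

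\textbf{Density.} Your block construction (i.i.d.\ $N$-blocks plus a uniformly random offset) produces a measure with a nontrivial $\Z/N\Z$ rotation factor, hence \emph{not} Bernoulli under $S$; you recognize this but the proposed fix (``$\bar d$-closedness \dots\ Markov approximations toward FD targets'') is not an argument. The paper sidesteps the periodicity obstruction entirely: instead of periodic markers it builds a \emph{Bernoulli} tiling factor of small entropy, then fills the tiles independently according to the block distributions induced by $\mu$. The resulting measure is a factor of (Bernoulli tiling)$\times$(i.i.d.\ block choices), hence Bernoulli outright, is weak-$*$ close to $\mu$, and has entropy $\ge h(\mu)\ge c$. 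Entropy is then lowered continuously to exactly $c$ by a separate interpolation lemma (replacing a fraction $t\in[0,1]$ of tiles by fixed typical blocks and applying the intermediate value theorem to $t\mapsto h$), preserving Bernoullicity since the map is a factor.

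\textbf{$G_\delta$ for Bernoullicity.} The finitely determined characterization does not yield a $G_\delta$ description in the way you claim. Written out, FD reads $\forall\epsilon\,\exists n,\delta\,\forall\nu\,[\cdots\Rightarrow\bar d(\mu,\nu)<\epsilon]$; the inner universal quantifier ranges over all invariant measures, the constraint set depends on $\mu$, and $\bar d$ is not weak-$*$ continuous, so for fixed $(\epsilon,n,\delta)$ the condition is not an open predicate in $\mu$. Restricting to $C_1$ does not remove this. The paper (following Part~I) uses instead the \emph{very weak Bernoulli} characterization, which involves only conditional distributions of $\mu$ itself. Even VWB in its raw form has a problematic quantifier $\forall k\ge k_0$; the key device is to replace this by a single entropy inequality of the form
\[
H_\mu\bigl(\R^{[0,N-1]}\bigm|\R^{[-k_0,-1]}\bigr)<Nc+\delta,
\]
which, on the slice $\{h=c\}$, forces approximate conditional independence of $\R^{[0,N-1]}$ from the further past given $\R^{[-k_0,-1]}$ for \emph{all} $k\ge k_0$ automatically. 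With this reformulation the quantifier structure becomes $\forall\epsilon\,\exists N,k_0,\G$ of open conditions on clopen cylinder events, giving a genuine $G_\delta$ set. The dependence on $c$ here is essential: it is precisely the pinning $h(\mu)=c$ that makes the entropy inequality an open condition.
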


The aim of this second part is to prove an analogous result, where integers are replaced by an arbitrary countable amenable  group $G$. Often, extensions 
to amenable groups turn out to be routine. However, in this case there is a real difficulty. 
For instance, the simple (for $\Z$-shifts) task of partitioning a symbolic \sq\ into long blocks, in case of a $G$-shift becomes much more intricate and requires the machinery of tilings.
For $\Z$-actions, in order to show that Bernoulli measures form a $G_\delta$-set, we used the characterization of Bernoulli processes as those processes that are very weakly Bernoulli (VWB). This notion depends in an essential way on the fact that for processes indexed by the integers there is a notion of the ``past'' of a process, and the process entropy can be defined as the conditional entropy of the present given the past. For a general amenable group there is no such corresponding notion. 
  
In order to circumvent this problem, we use the  method introduced in \cite{9a} and the characterization of Bernoulli extensions as those satisfying the 
relatively VWB property introduced in \cite{11a}. In essence, this amounts to proving a relative version of Rudolph's theorem for the integers and then utilizing the orbit equivalence between all ergodic amenable group actions.      

The precise formulation of the main theorem of this paper, the generalization of Rudolph's theorem to actions of countable amenable groups, is as follows:

\begin{thm}\label{Ru}
Let $\Omega=\Lambda^G$, where $\Lambda$ is a finite alphabet and $G$ an infinite countable amenable group, and let $\sigma$ denote the $G$-shift on $\Omega$. For a positive number $c\le\log|\Lambda|$, let $\M_c$ denote the set of all $\sigma$-\im s on $\Omega$ with the Kolmogorov-Sinai entropy at least $c$. Then Bernoulli measures $\mu$ (i.e., such that the measure-preserving $G$-system $(\Omega,\mu,\sigma)$ is isomorphic to an i.i.d.\ $G$-process) of entropy equal to $c$ is a dense $G_\delta$ subset of $\M_c$.
\end{thm}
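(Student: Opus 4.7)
The plan is to treat the density and the $G_\delta$ character separately. Density of the set of Bernoulli measures of entropy exactly $c$ inside $\M_c$ should proceed along the same lines as in part I: one approximates an arbitrary $\mu\in\M_c$ first by ergodic invariant measures (which are dense in the simplex of $\sigma$-invariant measures on the full shift), then by i.i.d.\ measures $\nu_p$ of entropy close to $h(\mu)$, and finally adjusts the marginal $p$ --- using the continuity of $p\mapsto\nu_p$ and of $H(p)=-\sum_i p_i\log p_i$ on the simplex of probability vectors on $\Lambda$ --- so that the resulting Bernoulli measure has entropy exactly $c$ while remaining close to $\mu$ in the weak-$*$ topology. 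This step requires the amenable Rokhlin lemma (for approximation by ``periodic'' measures supported on tilings) but is otherwise routine.

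The serious difficulty is the $G_\delta$ property. In part I one rewrote ``Bernoulli'' as ``very weakly Bernoulli'' (VWB), a property that unfolds into a countable intersection of weak-$*$ open conditions because it is expressed through conditional distributions of finite blocks given finitely many past coordinates. For a general countable amenable $G$ there is no notion of past. As indicated in the introduction, I would follow the route of \cite{9a} and \cite{11a} and first prove a \emph{relative} analog of Rudolph's theorem for the integers: given a $\Z$-system with a distinguished factor $(Y,\nu)$, the extensions that are relatively Bernoulli over $Y$ with a prescribed relative entropy form a dense $G_\delta$ in the set of extensions of relative entropy at least $c$. This should parallel part I step by step, with all entropies, $\bar d$-distances and VWB conditions conditioned on the factor algebra, and with the relative VWB characterization of relatively Bernoulli extensions from \cite{11a} replacing the absolute VWB criterion.

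To pass from the relative $\Z$-statement to the theorem for $G$-actions, I would appeal to the classical orbit equivalence theorem: every free ergodic action of a countable amenable group is orbit equivalent to an ergodic $\Z$-action. Following the method of \cite{9a}, this orbit equivalence --- constructed explicitly via tilings of $G$ by almost-invariant F{\o}lner sets (the ``intricate'' combinatorics alluded to in the introduction) --- realizes $(\Omega,\mu,\sigma)$ as a $\Z$-system together with a cocycle/time-change factor. A Rudolph--Weiss-type identification then says that $\mu$ is $G$-Bernoulli precisely when the associated $\Z$-action is relatively Bernoulli over this factor with the correct relative entropy. Combined with the $G_\delta$ property established in the relative $\Z$-setting, this transfers to a $G_\delta$ description of $G$-Bernoulli measures inside $\M_c$.

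The main obstacle I anticipate is making this transfer across orbit equivalence compatible with the weak-$*$ topology on $\M_c$. Orbit equivalence is by nature a measurable object, whereas the $G_\delta$ conclusion is topological, so the tiling and time-change constructions must be carried out uniformly enough that open neighborhoods in the $\Z$-picture pull back to $G_\delta$ sets in the $G$-picture. A secondary subtlety is that $\M_c$ contains non-ergodic measures, so one must either argue fiberwise in the ergodic decomposition --- and verify that the fiberwise relative-VWB condition assembles into a $G_\delta$ condition globally --- or phrase the whole argument from the start in a relative framework that does not require ergodicity of the measure on $\Omega$.
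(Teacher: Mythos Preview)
Your density argument has a genuine gap: the i.i.d.\ measures $\nu_p=p^G$ on $\Lambda^G$ form only a $(|\Lambda|-1)$-parameter family and are \emph{not} dense in $\M_c$ (every $\nu_p$ has independent coordinates, so cannot approximate a measure with nontrivial block statistics). ``Bernoulli'' here means \emph{isomorphic} to an i.i.d.\ process, and the approximating Bernoulli measures must in general be far from product measures on $\Lambda^G$. The paper's route is: build a Bernoulli tiling system $(\bar\Omega,\nu,\sigma)$ with $(K,\gamma)$-invariant shapes $S_i$; given $\mu$, form the measure $\bar\eta_\mu$ on $\bar\Omega\times\Omega$ by placing blocks on the tiles conditionally independently with the distributions $D_{\mu,i}$ that $\mu$ induces on $\Lambda^{S_i}$. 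This $\bar\eta_\mu$ is Bernoulli (as a factor of a product of Bernoulli systems), its $\Omega$-marginal is $\eps$-close to $\mu$ and has entropy at least $h(\mu)$; a separate lemma then lowers the entropy continuously to exactly $c$ by a one-parameter family of factor maps that gradually overwrite tiles with a fixed $\mu$-typical block. Adjusting a marginal $p$ plays no role.

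For the $G_\delta$ part your outline (orbit equivalence plus relative VWB) is in the right spirit, and you correctly flag the obstacle that orbit equivalence is measurable while the conclusion is topological. But you do not resolve it, and the resolution is the key idea. One does \emph{not} apply orbit equivalence to $(\Omega,\mu,\sigma)$ for each $\mu$. Instead, fix once and for all an auxiliary symbolic Bernoulli $\Z$-system $(X,\lambda,T)$ and a Bernoulli $G$-action $\tau$ on $(X,\lambda)$ with the same orbits as $T$; this yields a cocycle $x\mapsto g_x$ with $Tx=g_xx$, independent of $\mu$. For every $\mu$ consider the direct product $G$-system $(X\times\Omega,\lambda\times\mu,\tau\times\sigma)$ and the orbit-equivalent skew product $\bar T(x,\omega)=(Tx,g_x\omega)$. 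One shows $\mu$ is $G$-Bernoulli iff $\bar\mu=\lambda\times\mu$ is Bernoulli relative to $\lambda$ under $\bar T$. Since the auxiliary data are fixed, $\mu\mapsto\bar\mu$ is a homeomorphism, the partitions $\P\times\Lambda$ and $\P$ are clopen, and (after reformulating relative VWB so as to eliminate the quantifier $\forall_{k\ge k_0}$ via conditional $\eps$-independence and the constraint $h(\mu)=c$) the conditions become a countable intersection of weak-$*$ open sets in $\mu$. This dissolves both worries you raise: the orbit-change data do not vary with $\mu$, and no fiberwise argument over an ergodic decomposition is needed.
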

Theorem~\ref{Ru} is a direct consequence of Theorems~\ref{T1} and~\ref{T2} proved in Sections~\ref{S3} and~\ref{S4}, and asserting, respectively, that Bernoulli measures with entropy $c$ are dense in $\M_c$, and that Bernoulli measures with entropy $c$ form a $G_\delta$-subset of $\M_c$. 
\medskip

The tools developed to prove Theorem~\ref{Ru} allow one to prove, with almost no additional effort, a relative version of this theorem.
This is done on Section~\ref{S5}, to be specific, we will sketch the proof of the following:
\begin{thm}\label{Ru1}
Let $\Lambda$ and $\Lambda_1$ be finite alphabets and let $\Omega=\Lambda^G$ and $\Omega_1=\Lambda_1^G$. On both spaces we consider the action of the $G$-shift $\sigma$. Let $\pi:\Omega\to\Omega_1$ be a \tl\ factor map. Let $\mu_1$ be an ergodic $\sigma$-\im\ on $\Omega_1$. For a positive number $c\le\sup\{h(\mu|\Omega_1):\mu\in\M, \pi^*(\mu)=\mu_1\}$, let $\M^{\mu_1}_c$ denote the set of all $\sigma$-\im s $\mu$ on $\Omega$ such that $\pi^*(\mu)=\mu_1$ and such that the conditional entropy $h(\mu|\Omega_1)$ is greater than or equal to $c$. Then measures $\mu$ which are relatively Bernoulli with respect to $\mu_1$ and such that $h(\mu|\Omega_1)=c$ form a dense $G_\delta$ subset~of~$\M^{\mu_1}_c$.
\end{thm}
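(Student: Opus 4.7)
The plan is to decompose Theorem~\ref{Ru1} in exact parallel with Theorem~\ref{Ru}: first prove that relatively Bernoulli extensions of $\mu_1$ with $h(\mu|\Omega_1)=c$ are dense in $\M^{\mu_1}_c$, and then prove that the set of such extensions is a $G_\delta$ subset of $\M^{\mu_1}_c$. The conjunction of these two statements is Theorem~\ref{Ru1}.

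For the density half, I would revisit the argument behind Theorem~\ref{T1}. As the introduction indicates, that argument already operates ``relatively,'' reducing the amenable case to a $\Z$-case via the orbit equivalence of \cite{9a} and then invoking the relative VWB criterion of \cite{11a}. To prove the relative density, the prescribed factor $\pi$ must be carried throughout the construction. Given $\mu\in\M^{\mu_1}_c$, one produces weak$^*$ approximants $\mu_n\to\mu$ with $\pi^*\mu_n=\mu_1$, $h(\mu_n|\Omega_1)=c$, and $\mu_n$ relatively Bernoulli over $\mu_1$, by arranging that the orbit equivalence used in Section~\ref{S3} projects to an orbit equivalence on $(\Omega_1,\mu_1)$. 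The $\Z$-step can then be performed with $\mu_1$ in the role of the base factor, and the final reduction of the conditional entropy to exactly $c$ is achieved by adjoining a relatively independent Bernoulli complement of the appropriate conditional entropy, exactly as in the absolute case but with conditional entropies and relatively Bernoulli products replacing their absolute analogues.

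For the $G_\delta$ half, the strategy is to mimic Theorem~\ref{T2}. The absolute proof expresses Bernoullicity via the VWB property as a countable conjunction of open conditions in terms of $\bar d$-closeness of empirical conditional distributions on F\o lner tiles. The relative VWB criterion of \cite{11a} admits an analogous reformulation: being relatively Bernoulli over $\mu_1$ is a $G_\delta$ condition among measures $\mu$ with $\pi^*\mu=\mu_1$. Moreover, on the closed set $\{\mu:\pi^*\mu=\mu_1\}$ the map $\mu\mapsto h(\mu|\Omega_1)$ differs from $\mu\mapsto h(\mu)$ by the constant $h(\mu_1)$, hence it is upper semicontinuous, so $\{h(\mu|\Omega_1)=c\}\cap\M^{\mu_1}_c$ is a $G_\delta$ subset of $\M^{\mu_1}_c$. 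Intersecting with the relatively Bernoulli condition yields the required $G_\delta$.

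The main obstacle I expect is technical rather than conceptual: verifying that the orbit-equivalence machinery of \cite{9a} and the tiling constructions underlying relative VWB can be set up coherently above the arbitrary ergodic factor $(\Omega_1,\mu_1,\sigma)$, so that both the approximating measures and the tilings used to test relative VWB project consistently to the given factor. Once this compatibility is in place, the absolute proofs of Theorems~\ref{T1} and~\ref{T2} should go through with ``entropy'' replaced by ``conditional entropy over $\Omega_1$'' and ``Bernoulli'' replaced by ``relatively Bernoulli over $\mu_1$,'' matching the ``almost no additional effort'' promised in the introduction.
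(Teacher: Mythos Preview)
Your overall decomposition into density plus $G_\delta$ is correct and matches the paper, and your $G_\delta$ half is essentially on target: the paper replaces Lemma~\ref{L4} by an analog (Lemma~\ref{L4n}) stating that $\mu$ is relatively Bernoulli over $\mu_1$ iff $\bar\mu=\lambda\times\mu$ is Bernoulli relative to $\lambda\times\mu_1$ under the $\Z$-action of $\bar T$, and then reruns Section~\ref{S4} with $\lambda\times\mu_1$ in place of $\lambda$. Your upper-semicontinuity remark for $h(\cdot|\Omega_1)$ is also fine.

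Your density paragraph, however, misreads the structure of the absolute proof. The orbit-equivalence passage to a $\Z$-action and the relative VWB criterion of \cite{11a} are used \emph{only} in Section~\ref{S4}; the density argument of Section~\ref{S3} never leaves the $G$-action and involves no VWB. The paper's relative density proof works directly via the Construction: take a Bernoulli tiling system $(\bar\Omega,\nu,\sigma)$, build the lift $\bar\eta$ of $\nu\times\mu_1$ whose tile-blocks on $\Omega$ are conditionally independent given the tiling \emph{and} the $\Omega_1$-block $\beta\in\Lambda_1^{S_i}$ (distributions $D_{i,\beta}$), prove the analog of Lemma~\ref{max}, and then show $\bar\eta$ is relatively Bernoulli over $\mu_1$. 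The key new ingredient you do not mention is Thouvenot's intermediate-factor theorem (stated here as Theorem~\ref{danpark}): if $X\to X'\to X''$ and $X$ is relatively Bernoulli over $X''$, then so is $X'$. This replaces the absolute fact ``a factor of Bernoulli is Bernoulli'' used in Lemma~\ref{l2}, and without it your sketch has no way to certify relative Bernoullicity of the approximants.

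Your entropy-adjustment step is also backwards: adjoining a relatively independent Bernoulli complement \emph{raises} conditional entropy rather than lowering it to $c$. The paper's mechanism (Lemma~\ref{l3} and its fiber-preserving analog Lemma~\ref{l3n}) instead constructs a continuous family of factor maps $\phi_t:\Omega\to\Omega$, commuting with $\pi$, that overwrite a fraction $t$ of the tiles by fixed $\mu$-typical blocks; then $t\mapsto h(\phi_t^*\mu|\Omega_1)$ moves continuously from $h(\mu|\Omega_1)\ge c$ down below $c$, and the intermediate value theorem yields an approximant with conditional entropy exactly $c$.
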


\section{Preliminaries}\label{S2}
Let $G$ be a countable group with unity $e$. Given a finite set $K\subset G$ and $\gamma>0$, we will say that a finite set $F\subset G$ is \emph{$(K,\gamma)$-\inv} if 
$$
\frac{|KF\triangle F|}{|F|}<\gamma
$$
($\cdot\triangle\cdot$ denotes the symmetric difference of sets and $|\cdot|$ stands for cardinality of a set).
A \emph{F\o lner \sq} in $G$ is any \sq\ $\{F_n\}_{n\ge1}$ of finte subsets of $G$ such that for every finite $K\subset G$ and any $\gamma>0$ the sets $F_n$ are eventually (i.e., for sufficiently large $n$) $(K,\gamma)$-\inv. Amenability of $G$ is equivalent to the existence of a F\o lner \sq. From now on we assume that $G$ is amenable and we fix some F\o lner \sq\ $\{F_n\}_{n\ge1}$ in $G$.
\smallskip

Let $(X,\mu)$ be a standard probability space\footnote{By convention, we skip marking the sigma-algebra, but we will refer to its elements as \emph{measurable sets}.}.
\begin{defn}\label{d1}
Let $\P=\{P_1,P_2,\dots,P_n\},\ \P'=\{P'_1,P'_2,\dots,P'_n\}$ be two measurable partitions of $(X,\mu)$ of the same cardinality $n$. We let
$$
d_1(\P,\P')=\inf_\pi\sum_{i=1}^n \mu(P_i\triangle P'_{\pi(i)}),
$$
where $\pi$ ranges over all permutations of $\{1,2,\dots,n\}$.
\end{defn}
 
Let $\tau:G\times X\to X$  be a measure-preserving action. When the system $(X,\mu,\tau)$ is fixed, we will write $gx$ in place of $\tau(g,x)$ ($g\in G,\ x\in X$) and $FA$ instead of $\{gx:g\in F, x\in A\}$, ($F\subset G$, $A\subset X$). The action is said to be \emph{free} if every $g\in G\setminus\{e\}$ satisfies $\mu\{x:gx=x\}=0$.
\smallskip

Let $\Lambda$ be a finite set called \emph{alphabet}. On $\Omega=\Lambda^G$ we will consider the action of the $G$-shift $\sigma:G\times\Omega\to\Omega$ given by
$$
(g\omega)_h=\omega_{hg}, \text{ where } \omega=(\omega_h)_{h\in G}\in\Omega,\text{ and } g\in G
$$
(by the convention introduce earlier, $g\omega$ stands for $\sigma(g,\omega)$). 
\smallskip

For $\omega\in\Omega$ and a set $K\subset G$, by $\omega|_K$ we will denote the restriction of $\omega$ to $K$ (note that $\omega$ is a function from $G$ to $\Lambda$). If $K=\{g\}$ is a singleton ($g\in G$), we will write $\omega_g$ rather than $\omega|_{\{g\}}$.

If $K$ is finite, $\omega|_K$ is called a \emph{block}. In general, a block over $K$ is an element $\alpha\in\Lambda^K$, i.e., a function $\alpha:K\to\Lambda$. We will often identify blocks up to shifting, that is we will treat $\Lambda^K$ and $\Lambda^{Kg}$ ($g\in G$) as identical sets.
We will say that a block $\alpha\in\Lambda^K$ \emph{appears} in $\omega$ if $\omega|_{Kg}=\alpha$ for some $g$. If $F\subset G$ is another finite set and $\beta\in\Lambda^F$ is a block, the \emph{empircal measure} on $\Lambda^K$ associated with $\beta$ is defined by frequencies:
$$
\emp_\beta(\alpha)=\frac1{|F|}|\{g\in F: Kg\subset F, \beta|_{Kg}=\alpha\}|, \ \ \alpha\in\Lambda^K.
$$
We can also associate empirical measures with elements $\omega\in\Omega$. For that, we need to refer to our fixed F\o lner \sq\ $\{F_n\}_{n\ge1}$ in $G$ and let
$$
\emp_\omega(\alpha)=\lim_{n\to\infty}\emp_{\omega|_{F_n}}(\alpha),
$$
provided that the limit exists. 
\begin{defn}\label{gen}
A point $\omega$ is \emph{generic} for a $\sigma$-\im\ $\mu$ on $\Omega$ if the empirical measures $\emp_\omega$ on $\Lambda^K$ exist for all finite sets $K\subset G$, and for every $K$ and $\alpha\in\Lambda^K$ it holds that
$$
\emp_\omega(\alpha) = \mu([\alpha]),
$$ 
where $[\alpha]$ denotes the \emph{cylinder} pertaining to the block $\alpha$, $[\alpha]=\{\omega\in\Omega:\omega|_K=\alpha\}$. 
\end{defn}
By the Lindenstrauss' pointwise ergodic theorem for amenable groups \cite[Theorem~1.3]{6a}, if we choose our F\o lner \sq\ $\{F_n\}_{n\in\N}$ to be tempered (which we always can), a generic point exists for every ergodic measure, in particular, for any Bernoulli measure.
\smallskip

In the sequel, by a slight abuse of notation, we will write $\mu(\alpha)$ rather than $\mu([\alpha])$. Observe that by the same abuse of notation, for every finite set $K\subset G$, $\Lambda^K$ is a clopen partition of $\Omega$ (i.e., with clopen atoms). 

\subsection{Ananlog of the Rokhlin Lemma} We will use several times \cite[Theorem~3.6]{2a}, which we rewrite adapted to the notation used throughout the paper. 
\begin{thm}\label{conley}
Consider a free action $\tau$ of $G$ on a standard probability space $(X,\mu)$.
For any finite set $K$ and any $\gamma>0$, there exists a collection $\{B_i: 1\le i\le m\}$ of measurable subsets of $X$, and a 
collection $\{S_i: 1\le i\le m\}$ of $(K,\gamma)$-\inv\ subsets of $G$, each containing $e$, and such that $\{gB_i:1\le i\le m,\ g\in S_i\}$ partitions $X$ (up to measure zero).
\end{thm}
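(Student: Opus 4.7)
The plan is to combine the Ornstein--Weiss quasi-tiling machinery with a measure-theoretic exhaustion (or a Borel combinatorial refinement) in order to pass from quasi-tilings of the group to an exact partition of the space. As a first step, for the given $K$ and $\gamma$, I would produce a finite list of $(K,\gamma)$-invariant ``shapes'' $S_1,S_2,\dots,S_m\subset G$ (each containing $e$, which can be arranged by left-translating) with the following quasi-tiling property: any sufficiently invariant F\o lner set $F\subset G$ admits a collection of ``centers'' $c^i_1,c^i_2,\dots$ in $F$ (for each $i\le m$) such that the translates $S_i c^i_j$ are pairwise disjoint and their union covers all but an $\eps$-fraction of $F$. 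This is the classical Ornstein--Weiss quasi-tiling theorem: it is proved by recursively choosing each $S_{i+1}$ to be much more invariant than $S_i$, and at each step greedily packing copies of the previously constructed shapes into $S_{i+1}$.

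Next I would transfer the group-theoretic quasi-tiling to the measure space $(X,\mu)$. Using freeness of $\tau$, I would pick a small measurable set $A$ for which the return map to $A$ witnesses large F\o lner portions of orbits; equivalently, a Rokhlin-style tower with a very invariant base. Inside such a tower I greedily select disjoint measurable ``center sets'' $B_i^{(0)}$ for each shape $S_i$, processing shapes in decreasing order of size so that the $S_iB_i^{(0)}$ remain disjoint. The quasi-tiling property of the shapes then guarantees $\mu\big(X\setminus\bigsqcup_i S_iB_i^{(0)}\big)<\eps$.

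To upgrade this partial coverage to an exact partition modulo null sets, I would iterate the construction on the uncovered remainder $X_1=X\setminus\bigsqcup_i S_iB_i^{(0)}$, with $\eps$ replaced by $\eps/2$, then on $X_2\subset X_1$, and so on. Because the leftover measure decreases geometrically, the union of all centers chosen over the stages yields a collection of measurable sets $B_i$ (one for each shape $S_i$; shapes get reused across stages) whose translates exhaust $X$ up to measure zero. A cleaner alternative, used in \cite{2a}, is to invoke the Conley--Jackson--Kerr strengthening that directly produces \emph{exact} (rather than approximate) tilings of $G$ by a finite collection of invariant shapes, in which case a single application of the greedy procedure already partitions $X$.

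The main obstacle is controlling the trade-off between invariance of the shapes and finiteness of their number while forcing the coverage to be exact. A naive exhaustion produces shapes that may degrade in invariance or proliferate beyond a finite family; the resolution requires either the careful geometric bookkeeping of Ornstein--Weiss (choosing invariance parameters at each stage to absorb the error) or the combinatorial exact-tiling result that is the content of \cite[Theorem~3.6]{2a}. Freeness of $\tau$ is used throughout to identify orbit portions with translates of sets in $G$, so that the group-level tiling lifts without collapse to a measurable partition of $X$.
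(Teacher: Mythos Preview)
The paper does not prove this theorem: it is quoted (with adapted notation) from \cite[Theorem~3.6]{2a} and used as a black box. So there is no ``paper's own proof'' to compare against; your final suggestion to simply invoke \cite{2a} is exactly what the authors do.

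Your sketched alternative via Ornstein--Weiss quasi-tiling plus measure-theoretic exhaustion has a real gap, which you partly flag but do not resolve. After the first stage leaves an uncovered set $X_1$ of measure $<\eps$, you cannot ``iterate the construction on $X_1$'': the remainder is not $G$-invariant, and the orbit-fragments lying in $X_1$ are precisely the small gaps between already-placed tiles, typically too small to contain any translate $S_ix$ of a fixed shape. Reusing the same shapes therefore cannot shrink the uncovered measure, while introducing new smaller shapes at each stage destroys finiteness of the shape list and the uniform $(K,\gamma)$-invariance. Passing from $\eps$-quasi-tilings to \emph{exact} measurable tilings with finitely many shapes is exactly the nontrivial content of \cite{2a}; it requires a different mechanism (a combinatorial promotion of quasi-tilings to exact tilings at the group level, together with a measurable selection on the orbit equivalence relation) rather than naive exhaustion.
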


The above theorem is a generalization of Rokhlin's lemma known for actions of $\Z$ (in which case $m=2$ is sufficient). The sets $S_iB_i$ are interpreted as \emph{towers} (we have $m$ disjoint towers), and, for given $1\le i\le m$, the sets $gB_i$ with $g\in S_i$ are called \emph{levels} of the $i$th tower (there are $|S_i|$ disjoint levels). The set $B_i$ (which is a level, because $e\in S_i$) is called the \emph{base} of the $i$th tower. 
In this context, the usual notion of height of a tower is insufficient, and we will refer to the set $S_i$ as the \emph{shape} of the $i$th tower.

\subsection{Tiling factor} In addition to the sets $B_i$ with $1\le i\le m$, let us also define $B_0=X\setminus\bigcup_{i=1}^mB_i$. One can create a natural symbolic factor of the system $(X,\mu,\tau)$, over the alphabet $\{0,1,2,\dots,m\}$, via the factor map $\psi:X\to\bar\Omega$, where $\bar\Omega=\{0,1,2,\dots,m\}^G$, defined by the rule
$$
\psi(x)_g = i \iff gx\in B_i, \ \ 0\le i\le m.
$$
In other words, $\psi(x)$ is the \emph{$\P$-itinerary} of $x$, where $\P$ is the partition of $X$ by the sets $B_i$, $i\in\{0,1,\dots,m\}$. The push-down of $\mu$,
$\psi^*(\mu)$, is a shift-\im\ on $\bar\Omega$ and will be denoted by $\nu$. The partition condition of Theorem~\ref{conley} implies what we call the \emph{tiling property} of the factor $(\bar\Omega,\nu,\sigma)$ with respect to the shapes $S_1,S_2,\dots,S_m$:
\begin{multline}\label{tp}
\text{For $\nu$-almost every $\bar\omega\in\bar\Omega$, the family $\{S_ig: 1\le i\le m, \bar\omega_g=i\}$}\\ \text{is a partition of $G$.}
\end{multline}
This partition divides $G$ into ``tiles'', each being a shifted copy of one of the finitely many shapes~$S_i$. The symbolic measure-preserving system $(\bar\Omega,\nu,\sigma)$ will be called a \emph{tiling factor} of $(X,\mu,\tau)$. The same symbolic system satisfying~\eqref{tp}, considered in isolation will be called a \emph{tiling system} and its elements will be denoted by $\bar\omega=(\bar\omega_g)_{g\in G}$.

Conversely, any factor map $\psi:X\to\bar\Omega$ of $(X,\mu,\tau)$ to a tiling system $(\bar\Omega,\nu,\sigma)$ with shapes $S_1,S_2,\dots,S_m$, determines the towers specified in Theorem\ref{conley}. Namely, the bases of the towers are the sets $B_i=\psi^{-1}([i])=\{x:\psi(x)_e=i\}$ ($1\le i\le m$), and the shapes of the towers are the respective sets $S_i$. The partition condition of Theorem~\ref{conley} follows immediately from the tiling property \eqref{tp}.

\smallskip
When, instead of a general system $(X,\mu,\tau)$, we consider a symbolic $G$-shift $(\Omega,\mu,\sigma)$ where $\Omega=\Lambda^G$ and $\mu$ is free, and we pick an $\omega\in\Omega$ in the domain of the tiling factor $\psi:\Omega\to\bar\Omega$, then the blocks $\omega|_{S_ig}$, where $g$ is such that $\psi(\omega)_g=i$ ($1\le i\le m$), are called \emph{tiles} of $\omega$. The set $S_i$ is called the \emph{shape} of the tile, and $g$ is its \emph{center}. Almost every element $\omega\in\Omega$ is a \emph{concatenation}\footnote{Formally, a concatenation of blocks $\alpha_\kappa$ over disjoint domains $K_\kappa\subset G$ ($\kappa$ is ranging over a finite or countable set) is the function defined on the union $\bigcup_\kappa K_\kappa$ whose graph is the union of the graphs of the blocks $\alpha_\kappa$.} of its tiles.
\smallskip

Let $\M$ denote the collection of all $\sigma$-\im s on $\Omega$.
In what follows, we will use a convex metric on $\M$ compatible with the weak* topology. Given such a metric and an $\eps>0$, there exists a finite set $K$ and $\delta>0$ such that if two measures agree up to $\delta$ on every block $\alpha\in\Lambda^K$  then these measures are $\eps$-close. The blocks in $\Lambda^K$ will be referred to as \emph{test blocks}. Also, if $F$ is a large finite set, we will say that a block $\beta\in\Lambda^F$ is $\eps$-close to a measure $\mu$ if its empirical measure $\emp_\beta$ on test blocks agrees  up to $\delta$ with the values assigned by $\mu$. 

%The following fact holds. We skip the standard proof. 
%\begin{fact}\label{f1}There exists $\gamma>0$ such that whenever $\mu'\in\M$ is supported by elements $\omega\in\Omega$ which are concatenations of tiles with $(K,\gamma)$-\inv\ shapes $S_1,S_2,\dots,S_m$, where each of the tiles is $\eps$-close to $\mu$, then $\mu'$ is $2\eps$-close to $\mu$. \hfill\qed
%\end{fact}

\section{Bernoulli measures are dense in $\M_c$}\label{S3}

We assume familiarity of the reader with the notions of measure-theoretic entropy and conditional entropy. We will use the following notation.
\begin{itemize}
\item If the action $\tau$ on a probability space $(X,\mu)$ is fixed then the Kolmogorov--Sinai entropy of the system $(X,\mu,\tau)$ will be denotet shortly by $h(\mu)$. This notation is especially convenient in the context of \tl\ systems supporting numerous \im s. Then we can talk of the \emph{entropy function} $h$ on \im s. It is well known that for the shift system $(\Omega,\sigma)$ the entropy function is upper semicontinuous on $\M$. 
\item If $\P$ is a finite measurable partition of $X$ then the Shannon entropy of $\P$ will be denoted by $H_\mu(\P)$ while the entropy of the process generated by $\P$ will be denoted by $h_\mu(\P)$.
\item Finally, if $\pi:X\mapsto X'$ is a factor map between two measure-preserving $G$-systems $(X,\mu,\tau)$ and $(X',\mu',\tau')$ (note that then $\mu'=\pi^*(\mu)$), then $h(\mu|X')$ stands for the conditional entropy of $\mu$ with respect to the sigma-algebra lifted from $X'$ against $\pi$.
\end{itemize}

From now on, we will mostly focus on the $G$-shift $(\Omega,\sigma)$ and the set $\M$ of all $\sigma$-\im s on $\Omega$.
For a number $0<c\le\log|\Lambda|$, let $\M_c=\{\mu\in\M: h(\mu)\ge c\}$. Since the entropy function is upper semicontinuous, $\M_c$ is a closed subset of $\M$, and thus it is a Baire space.

A measure $\mu\in\M$ is \emph{Bernoulli} if the measure-preserving system $(\Omega,\mu,\sigma)$ is isomorphic to an i.i.d.\ process. Equivalently, there exists a finite measurable partition $\P$ of the measure space $(\Omega,\mu)$ which is \emph{generating} (i.e., the partitions $g\P$ with $g\in G$ jointly generate the sigma-algebra in $(\Omega,\mu)$), and such that the partitions $g\P$ of $(\Omega,\mu)$ are independent as $g$ ranges over $G$.

The goal of this section is to prove the following fact:

\begin{thm}\label{T1} The set of Bernoulli measures with entropy $c$ is dense in $\M_c$.
\end{thm}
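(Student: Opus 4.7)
Given $\mu\in\M_c$ and $\eps>0$, the plan is to produce a Bernoulli measure $\lambda$ with $h(\lambda)=c$ and $\dist(\mu,\lambda)<\eps$, through an i.i.d.-over-tiles construction in the spirit of the tiling-factor formalism of Section~\ref{S2} and the method of \cite{9a}. First, I would select a finite $K\subset G$ and $\delta>0$ so that any two invariant measures whose $\Lambda^K$-marginals agree to within $\delta$ are weak*-$\eps$-close. Next, I would invoke a strengthening of Theorem~\ref{conley} to obtain shapes $S_1,\dots,S_m$ that are $(K,\gamma)$-invariant for tiny $\gamma$, together with a tiling system $(\bar\Omega,\bar\nu,\sigma)$ realized as a factor of a genuine Bernoulli $G$-action; this last property, beyond what Theorem~\ref{conley} alone supplies, is precisely the input provided by \cite{9a}.

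For each shape $S_i$, let $\mu_i$ be the marginal of $\mu$ on $\Lambda^{S_i}$ (i.e.\ the law of $\omega|_{S_i}$ under $\mu$). By the amenable-group Shannon--McMillan--Breiman theorem of Ornstein--Weiss, $\mu_i$ concentrates, up to a small mass error, on a set of roughly $e^{|S_i|h(\mu)}$ blocks of nearly equal $\mu_i$-weight. Since $h(\mu)\ge c$, a pigeonhole/greedy selection yields a subset $B_i\subset\Lambda^{S_i}$ with $|B_i|\approx e^{|S_i|c}$ whose uniform distribution $\tilde\mu_i$ still matches the $\Lambda^K$-empirical statistics of $\mu$ to within $\delta$. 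I would then define $\lambda$ as the measure on $\Omega$ whose pushforward under the tiling-factor map is $\bar\nu$ and whose conditional law, given the tiling, is independent across tiles with each tile of shape $S_i$ drawn from $\tilde\mu_i$.

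The weak*-closeness $\dist(\mu,\lambda)<\eps$ then follows because the $(K,\gamma)$-invariance of the shapes forces all but a $\gamma$-fraction of $K$-translates to lie inside a single tile, so the $\Lambda^K$-empirical statistics of $\lambda$ inherit those of the $\tilde\mu_i$, which were chosen to approximate $\mu$. The entropy decomposes as $h(\lambda)=h(\bar\nu)+h(\lambda\,|\,\bar\Omega)\approx 0+c=c$, with the small residual error absorbed by a final adjustment of the cardinalities $|B_i|$. Finally, $\lambda$ is Bernoulli: the extension $\lambda\to\bar\nu$ is by construction relatively i.i.d.\ over tiles, hence relatively very weakly Bernoulli in the sense of \cite{11a}, and since $\bar\nu$ is a factor of a Bernoulli $G$-action, the fact that a relatively Bernoulli extension of a Bernoulli action is Bernoulli (the amenable-group analog of Thouvenot's theorem) allows us to conclude that $\lambda$ is Bernoulli.

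The main obstacle is this last Bernoullicity step, which depends on two pieces of heavy machinery: the existence of a tiling system with $(K,\gamma)$-invariant shapes that is itself a factor of a Bernoulli $G$-action (from \cite{9a}), and the relative-VWB characterization of Bernoulli extensions (from \cite{11a}). The ``relatively i.i.d.\ over tiles'' property of $\lambda$ yields the relative VWB condition essentially for free, so the substantive content lies in having the right tiling system available. A secondary technicality is the handling of non-ergodic $\mu$: the typical-set selection must respect the ergodic decomposition, either by first approximating $\mu$ by ergodic measures of entropy at least $c$ or by sampling fibrewise with respect to the ergodic decomposition and verifying that the $\Lambda^K$-statistics still match in the average.
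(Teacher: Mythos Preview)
Your core construction---laying i.i.d.\ blocks over the tiles of a Bernoulli tiling system---is exactly the paper's strategy. But two of your steps diverge from the paper in ways that leave real gaps.

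First, your Bernoullicity argument appeals to ``relatively VWB in the sense of~\cite{11a}'' for the $G$-action. That reference treats $\Z$-actions, and as the introduction stresses, there is no direct VWB notion for a general amenable $G$ (no past); circumventing this is precisely the point of Section~\ref{S4}, where relative VWB is used only \emph{after} passing to a $\Z$-action via orbit equivalence. For Theorem~\ref{T1} the paper avoids relative theory altogether: Lemma~\ref{l2} exhibits the joint measure $\bar\eta$ as a \emph{factor} of the product $(\bar\Omega,\nu)\times(\tilde\Omega,\tilde\lambda)$ of two Bernoulli $G$-systems, hence Bernoulli, and then the marginal on $\Omega$ is Bernoulli as a factor. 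Incidentally, obtaining a Bernoulli tiling does not require any strengthening of Theorem~\ref{conley} or input from~\cite{9a}; Lemma~\ref{l1} simply applies Theorem~\ref{conley} to a low-entropy Bernoulli $G$-action and notes that the resulting tiling factor is Bernoulli because factors of Bernoulli systems are Bernoulli.

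Second, your mechanism for landing on entropy exactly~$c$---selecting roughly $e^{|S_i|c}$ typical blocks and ``adjusting the cardinalities $|B_i|$''---is discrete and cannot hit an arbitrary real $c$; nor is it clear how the residual $h(\bar\nu)$ is absorbed. The paper takes a different route: it uses the \emph{full} marginals $D_{\mu,i}$ of $\mu$ on $\Lambda^{S_i}$ (no SMB thinning), so that by Lemma~\ref{max} the resulting Bernoulli measure $\bar\eta_\mu^{(2)}$ has entropy $\ge h(\mu)\ge c$. It then applies Lemma~\ref{l3}, which builds a \emph{continuous} one-parameter family of factor maps $\phi_t:\Omega\to\Omega$ whose image entropies interpolate continuously from $h(\bar\eta_\mu^{(2)})$ down below $c$, and invokes the intermediate value theorem. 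This two-step scheme (overshoot, then continuously reduce) also dissolves your ``secondary technicality'': since $\bar\eta_\mu^{(2)}$ depends only on the marginals of $\mu$ and is Bernoulli (hence free and ergodic) regardless of whether $\mu$ is, Lemma~\ref{l3} applies directly and no ergodic decomposition of $\mu$ is ever needed.
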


The proof is partitioned into four lemmas, a Construction, and a concluding~part.

\begin{lem}\label{l1}For every finite set $K\subset G$ and any $\gamma>0$,
there exists a tiling system $(\bar\Omega,\nu,\sigma)$ with $(K,\gamma)$-\inv\ shapes, of entropy smaller than $c$, and which is Bernoulli.
\end{lem}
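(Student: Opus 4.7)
The plan is to realize the desired tiling system as a symbolic factor of an ambient free Bernoulli $G$-action, so that the tiling structure is produced by the Rokhlin-type partition of Theorem~\ref{conley}, while Bernoullicity is inherited from above via the Ornstein--Weiss theorem on factors of Bernoulli actions for countable amenable groups.

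Concretely, I would first fix an i.i.d.\ $G$-process $(X,\mu,\tau)$ whose Kolmogorov--Sinai entropy $h(\mu)$ is strictly smaller than $c$; for instance, any non-degenerate two-symbol i.i.d.\ process with a suitably chosen bias will do. Since $G$ is infinite and $\mu$ is non-trivial i.i.d., the action $\tau$ is automatically free, so Theorem~\ref{conley} applies and supplies bases $B_1,\dots,B_m\subset X$ together with $(K,\gamma)$-\inv\ shapes $S_1,\dots,S_m\subset G$ such that the levels $\{gB_i:1\le i\le m,\ g\in S_i\}$ partition $X$ up to $\mu$-measure zero. As explained in the paragraphs on tiling factors in Section~\ref{S2}, this partition data determines a factor map $\psi:X\to\bar\Omega$ onto a tiling system $(\bar\Omega,\nu,\sigma)$ with $\nu=\psi^*(\mu)$ and with shapes exactly $S_1,\dots,S_m$.

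By construction, $(\bar\Omega,\nu,\sigma)$ has $(K,\gamma)$-\inv\ shapes, and being a factor of $(X,\mu,\tau)$ its entropy satisfies $h(\nu)\le h(\mu)<c$. It remains only to verify that $\nu$ is Bernoulli, and for this I would invoke the Ornstein--Weiss theorem: every measurable factor of a Bernoulli action of a countable amenable group is again isomorphic to a Bernoulli $G$-system. Applied to the factor map $\psi$, this immediately gives the Bernoulli property of the tiling factor $(\bar\Omega,\nu,\sigma)$, completing the three required properties.

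The only genuinely non-trivial step is the appeal to the Ornstein--Weiss factor theorem; it packages a deep piece of isomorphism theory behind a single citation, but it is exactly the kind of input the introduction already signals will be used throughout the paper. Everything else — choosing $h(\mu)<c$, checking freeness of the ambient i.i.d.\ system, and translating between the Rokhlin towers of Theorem~\ref{conley} and their induced symbolic tiling factor — is bookkeeping already set up in Section~\ref{S2}, and I do not anticipate any serious obstacle there.
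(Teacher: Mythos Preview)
Your proposal is correct and follows essentially the same route as the paper: start from a low-entropy two-symbol i.i.d.\ $G$-process (the paper takes the $(p,1-p)$-Bernoulli measure with $p$ small), apply Theorem~\ref{conley} to obtain towers with $(K,\gamma)$-\inv\ shapes, pass to the associated tiling factor, and then use that factors of Bernoulli systems are Bernoulli together with the trivial entropy bound. The only cosmetic difference is that the paper explicitly names the bias parameter $p$, whereas you phrase it abstractly.
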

\begin{proof}
Let $\lambda_p$ be the $(p,1-p)$-Bernoulli measure on $\Omega_2=\{0,1\}^G$. If $p$ is small enough, the entropy $h(\lambda_p)$ of the system $(\Omega_2,\lambda_p,\sigma)$ is smaller than $c$. Note that $\lambda_p$ is ergodic and free. By Theorem~\ref{conley}, the system $(\Omega_2,\lambda_p,\sigma)$ admits a tiling factor $(\bar\Omega,\nu,\sigma)$ with $(K,\gamma)$-\inv\ shapes. Since factor maps do not increase entropy and any factor of a Bernoulli system is Bernoulli\footnote{This appears in~\cite[Theorem~4, p.\,127]{8a} in greater  generality than countable amenable groups. For another treatment for countable amenable groups see~\cite{3a}.}, this tiling system satisfies the assertion.
\end{proof}

\noindent{\bf Construction.} Let $(\bar\Omega,\nu,\sigma)$ be a dynamical tiling with shapes $S_1,S_2,\dots,S_m$, and let $\Omega=\Lambda^G$ where $\Lambda$ is a finite alphabet. We will say that a $(\sigma\times\sigma)$-\im\ $\eta$ on $\bar\Omega\times\Omega$ is a \emph{lift} of $\nu$ if its marginal on the first coordinate $\bar\Omega$ is equal to~$\nu$. Clearly, for every lift $\eta$ of $\nu$, the projection $\pi_1:\bar\Omega\times\Omega\to\bar\Omega$ given by $\pi_1(\bar\omega,\omega)=\bar\omega$ is a tiling factor of $(\bar\Omega\times\Omega,\eta,\sigma\times\sigma)$, therefore it determines the sets $B_i\subset\bar\Omega\times\Omega$ for $1\le i\le m$ and the towers $S_iB_i$ as in Theorem~\ref{conley}. For each $1\le i\le m$, let $D_i$ denote the distribution induced by $\eta$ on $\Lambda^{S_i}$ conditioned on $B_i$, i.e.,
$$
D_i(\alpha) = \frac{\eta(B_i\cap[\alpha])}{\eta(B_i)}, 
$$
where $\alpha\in\Lambda^{S_i}$ and 
$[\alpha]=\{(\bar\omega,\omega)\in\bar\Omega\times\Omega:\omega|_{S_i}=\alpha\}$. Given $\alpha\in\Lambda^{S_i}$, the value of $D_i(\alpha)$ can be interpreted as the conditional probability that whenever $(\bar\omega,\omega)\in B_i$, in which case $(\bar\omega,\omega)|_{S_i}$ is a tile of $(\bar\omega,\omega)$, the projection of that tile on the second coordinate, i.e., $\omega|_{S_i}$, matches the block $\alpha$.
Next, let $\bar\eta$ be the lift of $\nu$ with the same conditional distributions $D_i$ as induced by $\eta$, but with the additional property that for any fixed $\bar\omega\in\bar\Omega$, the blocks $\alpha$ on the second coordinate of the tiles of $(\bar\omega,\omega)$ appear independently of each other. To be precise, let $\{\bar\eta_{\bar\omega}:\bar\omega\in\bar\Omega\}$ denote the disintegration of $\bar\eta$ with respect to $\nu$ (note that for $\nu$-almost every $\bar\omega\in\bar\Omega$, $\bar\eta_{\bar\omega}$ is a measure on $\Omega$). Given $\bar\omega\in\bar\Omega$ and $1\le i\le m$, let
$$
G_{\bar\omega,i}=\{g\in G: g\bar\omega\in B_i\}, 
$$
and let $G_{\bar\omega}=\bigcup_{i=1}^m G_{\bar\omega,i}$.
Then, for $\nu$-almost every $\bar\omega$ the sets 
$$
\{\omega:\omega|_{S_ig}=\alpha\},
$$
where $\alpha\in\Lambda^{S_i}$ and $i$ is such that $g\in G_{\bar\omega,i}$,
are $\bar\eta_{\bar\omega}$-independent for all $g\in G_{\bar\omega}$.

\begin{lem}\label{max}The measure $\bar\eta$ has maximal entropy among those lifts of $\nu$
which induce the same conditional distributions $D_i$.
\end{lem}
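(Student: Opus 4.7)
The plan is to decompose entropy over the tiling factor $\pi_1\colon(\bar\Omega\times\Omega,\eta)\to(\bar\Omega,\nu)$ and then reduce the maximality to Shannon subadditivity tile by tile. By the Abramov--Rokhlin formula for factors of amenable group actions one has $h(\eta)=h(\nu)+h(\eta|\bar\Omega)$, so, since every admissible lift shares the marginal $\nu$, maximizing $h(\eta)$ is equivalent to maximizing the conditional entropy $h(\,\cdot\,|\bar\Omega)$ among lifts with the prescribed conditional distributions $D_1,\dots,D_m$.

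To compute $h(\eta|\bar\Omega)$, take the finite generating partition $\P$ of $\bar\Omega\times\Omega$ indexed by the values of $(\bar\omega_e,\omega_e)$. Then
$$
h(\eta|\bar\Omega)=\lim_{n\to\infty}\frac{1}{|F_n|}\int H_\eta\bigl(\P^{F_n}\bigm|\bar\omega\bigr)\,d\nu(\bar\omega),
$$
where $\P^{F_n}=\bigvee_{g\in F_n}g\P$. For $\nu$-a.e.\ $\bar\omega$, the tiling property \eqref{tp} partitions $G$ into translates $\{S_ig:g\in G_{\bar\omega,i},\ 1\le i\le m\}$; provided $F_n$ is sufficiently $(K,\gamma)$-invariant with $K$ absorbing all shapes $S_i$ and their inverses, all but an $o(|F_n|)$-fraction of $F_n$ is covered by tiles lying entirely inside $F_n$, and the count of such tiles of shape $S_i$ is $N_i(\bar\omega,F_n):=|F_n\cap G_{\bar\omega,i}|$ up to an error of the same order.

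Conditionally on $\bar\omega$, the partition $\P^{F_n}$ records $\omega|_{F_n}$, which, up to a boundary contribution absorbed into the $o(|F_n|)$ term, is determined by the tile contents $\omega|_{S_ig}$ over the interior tiles. By the definition of the $D_i$, for every admissible lift and every interior tile of shape $S_i$ the variable $\omega|_{S_ig}$ has marginal $D_i$. Subadditivity of Shannon entropy then gives
$$
H_\eta\bigl(\P^{F_n}\bigm|\bar\omega\bigr)\,\le\,\sum_{i=1}^{m} N_i(\bar\omega,F_n)\,H(D_i)+o(|F_n|),
$$
with equality iff the tile contents are mutually independent given $\bar\omega$---precisely the defining property of $\bar\eta$ stated in the Construction. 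Dividing by $|F_n|$, applying Lindenstrauss' pointwise ergodic theorem to obtain $N_i(\bar\omega,F_n)/|F_n|\to\nu(B_i)$ for $\nu$-a.e.\ $\bar\omega$, and integrating yields the uniform upper bound $h(\eta|\bar\Omega)\le\sum_{i=1}^{m}\nu(B_i)\,H(D_i)$, attained by $\bar\eta$.

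The main obstacle is the boundary bookkeeping: the tiles crossing $\partial F_n$ must contribute only $o(|F_n|)$ to Shannon entropy, which is controlled by the uniform bound $\max_i|S_i|$ combined with the $(K,\gamma)$-invariance of $F_n$ for $K$ sufficiently large. Everything else is the standard combination of the Abramov--Rokhlin decomposition with the fact that among couplings with fixed marginals the product coupling maximizes joint entropy.
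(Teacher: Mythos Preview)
Your proposal is correct and follows essentially the same approach as the paper: both reduce via the Abramov--Rokhlin formula to the fiberwise conditional entropy, express it through the F\o lner--integral formula for $H_{\eta_{\bar\omega}}(\Lambda^{F_n})$, decompose $F_n$ into complete tiles plus a negligible remainder, and then contrast the subadditivity inequality for a general lift $\eta$ with the equality coming from the conditional independence built into $\bar\eta$. The only cosmetic difference is that you pass to the explicit limit $\sum_i\nu(B_i)H(D_i)$ via the pointwise ergodic theorem, whereas the paper compares $H_{\eta_{\bar\omega}}(\Lambda^{F_n})$ and $H_{\bar\eta_{\bar\omega}}(\Lambda^{F_n})$ directly for each $\bar\omega$ and large $n$; neither route requires anything the other does not.

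One small caution worth recording (shared with the paper's write-up): the assertion that, conditionally on a \emph{fixed} $\bar\omega$, each tile variable $\omega|_{S_ig}$ has marginal exactly $D_i$ is not literally true for an arbitrary lift~$\eta$ --- $D_i$ is only the \emph{average} of these fiber marginals over $\bar\omega\in B_i$. The pointwise bound $H_{\eta_{\bar\omega}}(\Lambda^{S_ig})\le H(D_i)$ can fail for individual $\bar\omega$. The argument is rescued either by integrating over $\bar\omega$ first and invoking concavity of $H$, or by noting that in the only application (comparison with $\eta=\nu\times\mu$) the fiber marginals genuinely equal $D_i$. This does not affect the correctness of the lemma, and your outline and the paper's proof are on the same footing here.
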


\begin{proof}
It suffices to show that $h(\bar\eta)\ge h(\eta)$, where $\eta$ and $\bar\eta$ are described in the Construction. Since $h(\eta)=h(\nu)+h(\eta|\bar\Omega)$, and
$$
h(\eta|\bar\Omega)=\limsup_{n\to\infty}\frac1{|F_n|}\int H_{\eta_{\bar\omega}}(\Lambda^{F_n})\,d\nu(\bar\omega),
$$
where $\{F_n\}_{n\in\N}$ is our fixed F\o lner \sq\ in $G$ (see \cite{13a}), it suffices to show that, for $\nu$-almost all $\bar\omega\in\bar\Omega$, any $\delta>0$ and large enough $n\in\N$, we have 
\begin{equation}\label{ineq}
\tfrac1{|F_n|}H_{\bar\eta_{\bar\omega}}(\Lambda^{F_n})\ge\tfrac1{|F_n|}H_{\eta_{\bar\omega}}(\Lambda^{F_n})-\delta.
\end{equation}
Fix some $\bar\omega\in\bar\Omega$ and a large $n\in\N$. We can decompose $F_n$ as the union of complete tiles of $\bar\omega$, i.e., sets of the form $S_ig$, where $g\in G_{\bar\omega,i}$ and $S_ig\subset F_n$, and some reminder $R$. Then, 
\begin{multline}\label{uno}
H_{\eta_{\bar\omega}}(\Lambda^{F_n})=H_{\eta_{\bar\omega}}\Bigl(\Lambda^R\vee\bigvee_{i=1}^m\ \bigvee_{\{g\in G_{\bar\omega,i}:\,S_ig\subset F_n\}}\Lambda^{S_ig}\Bigr)\le\\ 
H_{\eta_{\bar\omega}}(\Lambda^R)+\sum_{i=1}^m\ \sum_{\{g\in G_{\bar\omega,i}:\,S_ig\subset F_n\}}H(D_i).
\end{multline}
An identical formula holds for $\bar\eta$, except that, due to the conditional independence of the tiles, the inequality becomes equality:
\begin{equation}\label{dos}
H_{\bar\eta_{\bar\omega}}(\Lambda^{F_n})=H_{\bar\eta_{\bar\omega}}(\Lambda^R)+\sum_{i=1}^m\ \sum_{\{g\in G_{\bar\omega,i}:\,S_ig\subset F_n\}}H(D_i).
\end{equation}
Since $n$ is large, we can assume that $|R|$ is a small fraction of $|F_n|$, so that 
$H_{\eta_{\bar\omega}}(\Lambda^R)\le|R|\log|\Lambda|<|F_n|\delta$. The desired inequality~\eqref{ineq} now follows by juxtaposing~\eqref{uno} and~\eqref{dos}.
\end{proof}

\begin{lem}\label{l2}If the system $(\bar\Omega,\nu,\sigma)$ is Bernoulli, then the system $(\bar\Omega\times\Omega,\bar\eta,\sigma\times\sigma)$ constructed above is also Bernoulli.
\end{lem}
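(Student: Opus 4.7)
The plan is to exhibit the projection $\pi_1\colon(\bar\Omega\times\Omega,\bar\eta,\sigma\times\sigma)\to(\bar\Omega,\nu,\sigma)$ as a \emph{relatively Bernoulli} extension, i.e., as one satisfying the relative VWB condition of~\cite{11a}. Combined with the hypothesis that the base $(\bar\Omega,\nu,\sigma)$ is Bernoulli, the amenable-group theorem that a relatively Bernoulli extension of a Bernoulli system is itself Bernoulli (the main point of~\cite{11a}) then delivers the conclusion.

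First I would unpack the conditional structure of $\bar\eta$ furnished by the Construction: for $\nu$-almost every $\bar\omega$, the sets $\{S_ig: 1\le i\le m,\ g\in G_{\bar\omega,i}\}$ partition $G$, and under the fiber measure $\bar\eta_{\bar\omega}$ the tile-restrictions $\omega|_{S_ig}$ are independent, each distributed according to $D_i$. In particular, whenever $F\subset G$ is a union of complete tiles of $\bar\omega$, the conditional law of $\omega|_F$ given $\bar\omega$ and given arbitrary data on $G\setminus F$ is simply the product of the corresponding $D_i$'s, with no dependence on the external data.

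The central step is to verify the relative VWB criterion for the time-zero partition $\Q$ of $\bar\Omega\times\Omega$ recording $\omega_e\in\Lambda$. Given $\eps>0$, I would pick a tempered Følner set $F_n$ so large that, for $\nu$-typical $\bar\omega$, the density in $F_n$ of tiles of $\bar\omega$ not entirely contained in $F_n$ is below $\eps$; the $(K,\gamma)$-invariance of the shapes $S_i$ together with Lindenstrauss' pointwise ergodic theorem make this possible. On the complete-tile portion of $F_n$ the conditional law of $\omega|_{F_n}$ factorizes as a product of $D_i$'s independently of the outside, matching the unconditional law given only $\bar\omega$; the boundary piece contributes at most $\eps$ to the Hamming-type distance between the two conditional laws, which is precisely the approximation tolerated by the VWB definition.

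The main obstacle I anticipate is the random tile geometry: the tiling of $G$ depends on $\bar\omega$, so the decomposition of $F_n$ into complete tiles and a boundary remainder fluctuates with the sample. Controlling this boundary uniformly in $\bar\omega$ — via the Følner invariance of the shapes $S_i$ together with ergodicity of $\nu$ — and threading this uniform control into the formal statement of relative VWB is the technical heart of the argument; once it is in place, the appeal to~\cite{11a} concludes.
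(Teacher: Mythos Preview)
Your approach differs substantially from the paper's and contains a real gap. The paper gives a direct argument with no VWB machinery at all: form the alphabet $\Delta=\prod_{i=1}^m\Lambda^{S_i}$ with product distribution $D=\prod_{i=1}^m D_i$, let $\tilde\lambda=D^G$ be the Bernoulli measure on $\tilde\Omega=\Delta^G$, and observe that $(\bar\Omega\times\Omega,\bar\eta,\sigma\times\sigma)$ is a factor of the Bernoulli product $(\bar\Omega\times\tilde\Omega,\nu\times\tilde\lambda,\sigma\times\sigma)$ via the map that, at each tile center $g$ of $\bar\omega$ with shape $S_i$, reads off the $i$th component of the symbol $\tilde\omega_g\in\Delta$ and writes it onto $S_ig$. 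Since any factor of a Bernoulli system is Bernoulli, the lemma follows in one stroke.

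Your route---verify relative VWB for the extension $\pi_1$ and then invoke ``relatively Bernoulli over Bernoulli is Bernoulli''---runs into exactly the obstacle the introduction highlights: the (relative) VWB criterion of~\cite{11a} is formulated for $\Z$-actions, where there is a past, and does not transfer directly to a general amenable group~$G$. The paper's entire strategy in Section~\ref{S4} is to pass to an orbit-equivalent $\Z$-action \emph{before} appealing to~\cite{11a}. You instead attempt to verify a F\o lner-based VWB condition directly for the $G$-action, but you neither specify which amenable-group criterion this is (it is not the one in~\cite{11a}) nor actually carry out the verification; you yourself identify ``threading this uniform control into the formal statement of relative VWB'' as the technical heart, and that heart is absent. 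Making the relative isomorphism theory work for amenable groups is genuinely nontrivial---it occupies large portions of~\cite{8a} and~\cite{3a}---and in any case the paper's explicit factor-of-a-Bernoulli construction renders the entire detour unnecessary for this lemma.
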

\begin{proof}Let us create a new huge (yet finite) alphabet, $\Delta=\prod_{i=1}^m\Lambda^{S_i}$. By definition, each symbol in $\Delta$ is an  $m$-tuple of blocks. On $\Delta$, consider the product distribution $D=\prod_{i=1}^mD_i$ and let $\tilde\lambda$ denote the Bernoulli measure $D^G$ on $\tilde\Omega=\Delta^G$. Clearly, the direct product $(\bar\Omega\times\tilde\Omega,\nu\times\tilde\lambda,\sigma\times\sigma)$ is a Bernoulli system. We will show that  $(\bar\Omega\times\Omega,\bar\eta,\sigma\times\sigma)$ is a factor of $(\bar\Omega\times\tilde\Omega,\nu\times\tilde\lambda,\sigma\times\sigma)$. Since any factor of a Bernoulli system is Bernoulli, this will end the proof. Let a factor map $\theta:\bar\Omega\times\tilde\Omega\to\bar\Omega\times\Omega$ be given by:
$$
\theta(\bar\omega,\tilde\omega) = (\bar\omega,\omega),
$$
where, $\tilde\omega\in\tilde\Omega$ and $\omega\in\Omega$ is defined by $\omega|_{S_ig}=\alpha\in\Lambda^{S_i}$, where $S_ig$ is a tile of $\bar\omega$ and $\alpha$ occurs as the $i$th coordinate in the symbol ($m$-tuple of blocks) $\tilde\omega_g$. Verification that $\theta$ is a factor map sending $\nu\times\tilde\lambda$ to $\bar\eta$ is straightforward.
\end{proof}

\begin{lem}\label{l3}
Let $\mu$ be any free ergodic shift-\im\ on $\Omega=\Lambda^G$ with entropy $h(\mu)>0$. For any $\eps>0$ and any $c\in(0,h(\mu)]$ there exists a measure-theoretic factor map $\phi:\Omega\to\Omega$ 
such that $\phi^*(\mu)$ is $3\eps$-close to $\mu$ and
$h(\phi^*(\mu))=c$.
\end{lem}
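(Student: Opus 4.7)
Pick a finite set $K\subset G$ and $\delta>0$ small enough that any two $\sigma$-\im s agreeing within $\delta$ on every test block $\alpha\in\Lambda^K$ are $\eps$-close in the convex metric on $\M$. The plan is to construct $\phi$ by re-coding $\omega$ tile-by-tile along a Rokhlin-type tiling of $\mu$: each tile will be ``quantized'' over a restricted family of blocks of exponential size $e^{c|S_i|}$, chosen so that every surviving block has $\emp$-distribution on $\Lambda^K$ close to that of $\mu$.

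\textbf{Step 1 (Tile $\mu$).} Apply Theorem~\ref{conley} to the free ergodic system $(\Omega,\mu,\sigma)$ with invariance tolerance $\gamma\ll\delta$ along some $K'\supset K$, obtaining bases $B_1,\dots,B_m$, shapes $S_1,\dots,S_m$, and a tiling factor $\pi:(\Omega,\mu)\to(\bar\Omega,\nu)$. The shapes can be chosen so large that (i) the density of tile centers is $\le 1/\min_i|S_i|$, forcing $h(\nu)<\eta$ for any prescribed tiny $\eta>0$; (ii) by Lindenstrauss' pointwise ergodic theorem, the conditional distribution $D_i$ of $\mu$ on $\Lambda^{S_i}$ given $B_i$ concentrates on blocks whose empirical $K$-distribution is within $\delta/3$ of $\mu$; (iii) by the amenable Shannon--McMillan theorem, $D_i$ is essentially supported on a set $T_i\subset\Lambda^{S_i}$ with $|T_i|\le e^{(h(\mu)+\eta)|S_i|}$.

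\textbf{Step 2 (Quantize and reassemble).} For each $i$, fix a subset $R_i\subset T_i$ of size $\approx e^{c|S_i|}$ whose every member has empirical $K$-distribution within $\delta/3$ of $\mu$; since $|T_i|$ is exponentially larger than $e^{c|S_i|}$ when $c<h(\mu)$ (the case $c=h(\mu)$ being settled by $\phi=\mathrm{id}$), such $R_i$ is abundant. Build a measurable $f_i:\Lambda^{S_i}\to R_i$, for instance by grouping blocks by their empirical $K$-distribution and sending each class to a representative in $R_i$ with near-matching statistics. Set
\[
(\phi(\omega))|_{S_ig}:=f_i(\omega|_{S_ig})\quad\text{whenever}\ \pi(\omega)_g=i,\ 1\le i\le m;
\]
this gives a $G$-equivariant measurable map $\phi:\Omega\to\Omega$.

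\textbf{Step 3 (Closeness and entropy).} The empirical $K$-distribution of $\phi(\omega)$ equals, up to $O(\gamma)$ boundary contributions from $K$-windows straddling tile edges, the tile-weighted average of the $\emp$-distributions of the new tile contents $f_i(\omega|_{S_ig})$, each within $\delta/3$ of $\mu$. Hence $\phi^*(\mu)$ differs from $\mu$ by at most $\delta/3+O(\gamma)<\delta$ on every test block, so the two are $\eps$-close, safely inside the $3\eps$ budget. For entropy, $\phi(\omega)$ is determined by $\pi(\omega)$ plus one choice from $R_i$ per tile, yielding
\[
h(\phi^*(\mu))\le h(\nu)+\sum_{i=1}^m\nu(B_i)\frac{\log|R_i|}{|S_i|}\le c+\eta.
\]

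\textbf{Main obstacle.} The delicate point is pinning $h(\phi^*(\mu))$ to the value $c$ \emph{exactly} rather than merely ``$\le c+\eta$''. I plan to drive $h(\nu)$ and the Shannon--McMillan slack $\eta$ to zero by enlarging shapes, and then tune the $|R_i|$ on a single shape by a continuity argument to land on equality; a safety-net alternative is to post-compose with a further small in-tile quantization (essentially a Sinai-type Bernoulli reduction acting only inside tiles) that absorbs the residual entropy without disturbing the $K$-block statistics. The accompanying combinatorial input---that enough blocks in $T_i$ have near-correct empirical $K$-distribution to populate $R_i$, and that $f_i(D_i)$ is near-uniform on $R_i$ so that the displayed entropy bound is essentially tight---is the main technical content.
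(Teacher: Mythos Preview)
Your quantization approach is genuinely different from the paper's, and the obstacle you flag is real and not resolved by the fixes you suggest. Tuning $|R_i|$ is a discrete operation, so no continuity argument is available there; enlarging shapes drives the error $\eta$ to zero only along a \emph{sequence} of different maps $\phi$, not for a single one; and a post-composed in-tile quantization is again a discrete step that will overshoot or undershoot $c$. There is also a second, less visible gap: the displayed inequality in Step~3 is only an \emph{upper} bound. Near-uniformity of $f_i(D_i)$ on $R_i$ controls the marginal distribution of a single tile, but under $\mu$ the contents of distinct tiles may be highly correlated, so the joint entropy of the quantized tiles can fall well below $\sum_i\nu([i])\log|R_i|$. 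You have not supplied any argument for $h(\phi^*(\mu))\ge c-\eta'$.

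The paper avoids both issues by a continuous interpolation plus the intermediate value theorem. It arranges the tiling so that the tiling factor itself has entropy strictly less than $c$ (by first passing to a free factor of entropy $<c$ and tiling there). For each shape $S_i$ it selects a \emph{single} block $\alpha_i\in\Lambda^{S_i}$ whose empirical measure is $\eps$-close to $\mu$, and then defines a one-parameter family $\phi_t$, $t\in[0,1]$: choose measurable $B_i^{(t)}\subset B_i$ with $\mu(B_i^{(t)})=t\,\mu(B_i)$, overwrite the tile content by $\alpha_i$ whenever the tile center lies in $B_i^{(t)}$, and leave it unchanged otherwise. Then $\phi_0=\mathrm{id}$ gives entropy $h(\mu)\ge c$, while $\phi_1(\omega)$ is entirely determined by the tiling, so $h(\phi_1^*(\mu))\le h(\nu)<c$. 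The generating partition $\P_t=\phi_t^{-1}(\Lambda)$ varies $d_1$-continuously in $t$ (the symmetric difference sits in $\bigcup_i S_i(B_i^{(t')}\setminus B_i^{(t)})$, of measure $t'-t$), hence $t\mapsto h(\phi_t^*(\mu))=h_\mu(\P_t)$ is continuous, and the intermediate value theorem yields $t_0$ with $h(\phi_{t_0}^*(\mu))=c$ exactly. No Shannon--McMillan counting and no lower-bound estimate are needed; the $3\eps$-closeness is handled by the same tile-boundary argument you use.
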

\begin{proof}We start by choosing a free factor $(X,\mu',\tau)$ of the system $(\Omega,\mu,\sigma)$ with entropy less than $c$.\footnote{The existence of such a factor is stated without proof on page 3 in \cite{8a} (it can be deduced from the paper). An explicit version can be found in \cite[Theorem 5.4]{3a}.}
By Theorem~\ref{conley}, the system $(X,\mu',\tau)$ splits as a family of disjoint towers with bases $B_1,B_2,\dots,B_m$ and $(K,\gamma)$-\inv\ shapes $S_1,S_2,\dots,S_m$ (the parameters $K$ and $\gamma$ will be specified in a moment). These towers can be lifted to become towers of the original system $(\Omega,\mu,\sigma)$. From now on, we will think of $B_1,B_2,\dots,B_m$ as subsets of $\Omega$. What we have gained is that the  entropy of the tiling system associated with these towers has entropy smaller than $c$.
Since $\mu$ is free, it is nonatomic, and thus, for each $1\le i\le m$, there exists a family of measurable subsets $B^{(t)}_i\subset B_i$, indexed by $t\in[0,1]$, such that
\begin{enumerate}
    \item $B^{(1)}_i= B_i$, $B^{(0)}_i\subset B_i=\varnothing$, 
    \item $t<t'\implies B^{(t)}_i\subset B^{(t')}_i$,
    \item $\mu(B^{(t)}_i)=t\mu(B_i)$.
\end{enumerate}
Since $\mu$ is ergodic, by an appropriate choice of $K$ and $\gamma$ we can guarantee that for each $1\le i\le m$, there exists a block $\alpha_i\in\Lambda^{S_i}$, whose empirical measure is $\eps$-close to $\mu$.\footnote{This follows by a standard argument from the mean ergodic theorem, see \cite[Theorem~3,~p.\,52]{8a}.} For each $t\in[0,1]$, we let $\phi_t:\Omega\to\Omega$ be the factor map defined $\mu$-almost everywhere by the following local rule: if $g^{-1}\omega\in B_i$ then we let
$$
\phi_t(\omega)|_{S_ig} = 
\begin{cases}
\alpha_i& \text{ if }g^{-1}\omega\in B^{(t)}_i,\\
\omega|_{S_ig}& \text{ otherwise}.
\end{cases}
$$

In steps (a)-(c) below we will show that $\phi_t^*(\mu)$ is $3\eps$-close~to~$\mu$.
\begin{enumerate}[(a)]
    \item Let $\omega\in\Omega$ be generic for $\mu$. The shapes are $(K,\gamma)$-\inv, therefore, those occurrences of a test block in $\omega$ which intersect more than one tile contribute less than $\delta$ to the total frequency of that block in $\omega$. Thus, the \emph{averaged empirical measure associated with the tiles} of $x$ (obtained by averaging\footnote{The averaging depends on the F\o lner \sq, but we have fixed one together with the group~$G$.} the empirical measures over all tiles of $x$), is $\eps$-close to~$\mu$.
    \item The mapping $\phi_t$ replaces some ``random'' tiles of $\omega$ by tiles that are $\eps$-close to $\mu$, so the averaged empirical measure associated with the tiles of $\phi_t(\omega)$ remains $2\eps$-close to $\mu$.\footnote{Think of the averaging as a probabilistic integration (over the tiles) of the function $d$ representing the distance to $\mu$. We know that the integral is at most $\eps$. The mapping $\phi_t$ corresponds to replacing the function $d$ on some subset by a function $c\le\eps$. Clearly, the integral of the new function does not exceed the sum of the integrals of $d$ and $c$, which is at most $2\eps$.}
    \item Clearly, $\phi_t(x)$ is generic for $\phi^*_t(\mu)$. Using the same argument as in (a), the averaged empirical measure associated with the tiles of $\phi_t(x)$ is $\eps$-close to $\phi_t^*(\mu)$.  
\end{enumerate}
Now, the $3\eps$-closeness of $\phi_t^*(\mu)$ and $\mu$ follows by composing (b) and (c).

Next, we claim that the entropy of $\phi_t^*(\mu)$ varies continuously from $h(\mu)$ to a number less than $c$, as $t$ ranges from $0$ to $1$. For each $t$, let $\P_t=\phi_t^{-1}(\Lambda)$, where $\Lambda$ is viewed as the partition of $\Omega$. For $t<t'$ the partitions $\P_t$ and $\P_{t'}$ differ on (a subset of) the set
$$
\bigcup_{i=1}^mS_i(B_i^{(t')}\setminus B_i^{(t)}),
$$
whose measure, as easily seen, is equal to $t'-t$. This implies that the mapping $t\mapsto\P_t$ is continuous with respect to the $d_1$-distance of partitions with equal cardinalities (see Definition~\ref{d1}). 
Since $\P_t$ is a generator for $\phi_t^*(\mu)$, we have $h(\phi_t^*(\mu))=h_\mu(\P_t)$. It is well known that the dynamical entropy of a partition, $h_\mu(\cdot)$, is continuous on the space of partitions of a fixed finite cardinality, equipped with the $d_1$-distance. We conclude that $h(\phi_t^*(\mu))$ is a continuous function of~$t$. Since $\P_0=\Lambda$, we have $h(\phi_0^*(\mu))=h(\mu)\ge c$. On the other hand, the partition $\P_1$ is measurable with respect to the tiling factor associated with the towers (which is factor of $(X,\mu',T)$) and thus $h(\phi_1^*(\mu))=h_\mu(\P_1)\le h(\mu')<c$. We can now invoke the intermediate value theorem, which implies that there exists a $t_0\in[0,1]$ such that $h(\phi_{t_0}^*(\mu))=c$. The map $\phi=\phi_{t_0}$ satisfies the assertion of the lemma.
\end{proof}

\begin{proof}[Proof of Theorem~\ref{T1}]
Choose $\mu\in\M_c$ and $\eps>0$. The proof will be completed once we construct a Bernoulli measure on $\Lambda^G$, which is $4\epsilon$-close to $\mu$ and has entropy equal to $c$.

Let $(\bar\Omega,\nu,\sigma)$ be a dynamical tiling with $(K,\gamma)$-\inv\ shapes $S_1,S_2,\dots,S_m$ and which is Bernoulli (see Lemma~\ref{l1}; this time we do not even require small entropy). For each shape $1\le i\le m$, let $D_{\mu,i}$ denote the distribution of $\mu$ on $\Lambda^{S_i}$. Let $(\bar\Omega\times\Omega,\bar\eta_\mu,\sigma\times\sigma)$ be the system created according to Construction. By Lemma~\ref{l2}, this system is Bernoulli, and by Lemma~\ref{max}, $\bar\eta_\mu$ has maximal entropy among all lifts of $\nu$ such that for every $1\le i\le m$ the distribution on $\Lambda^{S_i}$ conditioned on $B_i$ is equal to $D_{\mu,i}$. In particular, $\bar\eta_\mu$ satisfies 
$$
h(\bar\eta_\mu)\ge h(\nu\times\mu)=h(\nu)+h(\mu).
$$
Let $(\bar\omega,\omega)$ be generic for $\bar\eta$. The empirical measure on test blocks averaged over the tiles of $(\bar\omega,\omega)$ is the same as for any element generic for $\nu\times\mu$, because it depends only on $\nu$ and the distributions $D_{\mu,i}$. Since the shapes are $(K,\gamma)$-\inv, the projection $\bar\eta_\mu^{(2)}$ of $\bar\eta_\mu$ on the second coordinate $\Omega$ is $\eps$-close to $\mu$ (this follows by the same argument as used earlier, in step (a)). We also have 
$$
h(\bar\eta_\mu^{(2)})\ge h(\bar\eta_\mu)-h(\nu)\ge h(\mu)\ge c.
$$
Note that $\bar\eta_\mu^{(2)}$, being a factor of $\bar\eta_\mu$, is Bernoulli (hence free). By Lemma~\ref{l3} applied to $\bar\eta_\mu^{(2)}$, there exists a $\bar\eta_\mu^{(2)}$-almost everywhere defined factor map $\phi:\Omega\to\Omega$ such that $\phi^*(\bar\eta_\mu^{(2)})$ is $3\eps$-close to $\bar\eta_\mu^{(2)}$ (hence $4\eps$-close to $\mu$), and which has entropy equal to $c$. 
As a factor of a Bernoulli measure, $\phi^*(\bar\eta_\mu^{(2)})$ is Bernoulli, and hence it satisfies all properties specified at the beginning of the proof.
\end{proof}

\section{Bernoulli measures form a $G_\delta$-set}\label{S4}

By Theorem~\ref{T1}, the set of Bernoulli measures with entropy $c$ is dense in $\M_c$. In this section, we will prove that the same set is of type $G_\delta$. 

\begin{thm}\label{T2}
The Bernoulli measures of entropy $c$ form a $G_\delta$-set in $\M_c$. 
\end{thm}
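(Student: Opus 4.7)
The plan is to express Bernoullicity of $\mu\in\M_c$ as a countable intersection of weak*-open conditions, following the path sketched in the introduction: reduce to the relatively very weak Bernoulli (RVWB) condition of an orbit-equivalent $\mathbb{Z}$-extension over a tiling factor, and invoke the fact that RVWB is manifestly a $G_\delta$ condition.

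First, since $h$ is upper semicontinuous on $\M$, the identity $\{h=c\}=\bigcap_n\{h<c+1/n\}$ expresses $\{h=c\}$ as a $G_\delta$ subset of $\M_c$. Hence it suffices to prove that the set $\mathcal{B}$ of Bernoulli measures in $\M_c$ is itself $G_\delta$. Next, fix a sequence of Bernoulli tiling systems $(\bar\Omega_k,\nu_k,\sigma)$ from Lemma~\ref{l1} with shapes $(K_k,\gamma_k)$-\inv, $K_k\uparrow G$ and $\gamma_k\downarrow 0$. For each $\mu\in\M_c$, form the joining $\bar\eta_\mu^{(k)}$ on $\bar\Omega_k\times\Omega$ as in the Construction preceding Lemma~\ref{max}. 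The map $\mu\mapsto\bar\eta_\mu^{(k)}$ is weak*-continuous, and by Lemma~\ref{l2}, $\bar\eta_\mu^{(k)}$ is Bernoulli iff $\mu$ is.

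The crucial step is to apply the orbit equivalence of Connes--Feldman--Weiss to realize the $G$-action on $(\bar\Omega_k\times\Omega,\bar\eta_\mu^{(k)})$ as a $\mathbb{Z}$-action on the same measure space, arranged so that the $\mathbb{Z}$-enumeration along each $G$-orbit is a fixed Borel function of the tiling $\bar\omega\in\bar\Omega_k$ alone. In particular, $\nu_k$ remains a common factor of both actions. By the characterization of relative Bernoulli extensions as RVWB (the machinery from the papers cited in the introduction), $\mu$ is Bernoulli iff the $\mathbb{Z}$-extension $\bar\eta_\mu^{(k)}\to\nu_k$ is RVWB for some (equivalently every) sufficiently large $k$. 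Finally, fixing a countable cofinal family of finite test partitions $\mathcal{P}_j$ of $\Omega$ (e.g., the cylinder partitions $\Lambda^{K_j}$), the RVWB condition for the $\mathbb{Z}$-extension becomes a countable conjunction, indexed by $j$ and rational $\epsilon>0$, of clauses asserting the existence of an $N$ at which a $\bar d$-type inequality of threshold $\epsilon$ holds; each inner clause involves only finitely many joint cylinder probabilities of $(\bar\omega,\omega)$ and therefore, because the $\mathbb{Z}$-enumeration depends only on $\bar\omega$, only finitely many cylinder probabilities of $\mu$. Each such clause is thus weak*-open in $\mu$, and the countable conjunction delivers $\mathcal{B}$ as a $G_\delta$.

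The principal obstacle will be the orbit-equivalence step: one must carry out the $\mathbb{Z}$-enumeration of $G$-orbits so that it depends \emph{only} on the tiling coordinate $\bar\omega$, and so that the RVWB conditions then genuinely pull back to finite-dimensional cylinder conditions on $\mu$. Without this measurability arrangement the RVWB conditions would not be weak*-open in $\mu$ and the reduction would collapse. In addition, verifying the equivalence ``$G$-Bernoulli $\iff$ $\mathbb{Z}$-RVWB over $\nu_k$'' relies on the relative Ornstein-type theorems referenced in the introduction, which provide the bridge between the two settings and underpin the whole strategy.
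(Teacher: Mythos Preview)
Your invocation of Lemma~\ref{l2} is the fatal error. That lemma says that whenever the tiling system $(\bar\Omega_k,\nu_k,\sigma)$ is Bernoulli, the measure $\bar\eta$ produced by the Construction is \emph{always} Bernoulli, for \emph{any} starting lift $\eta$ (in particular for $\eta=\nu_k\times\mu$ with an arbitrary $\mu$). So your claimed equivalence ``$\bar\eta_\mu^{(k)}$ is Bernoulli iff $\mu$ is'' is vacuous: the left side holds for every $\mu\in\M_c$, Bernoulli or not. The Construction was designed precisely to \emph{manufacture} Bernoulli measures close to a given $\mu$ (that is the content of Theorem~\ref{T1}); it cannot serve as a \emph{detector} of Bernoullicity of $\mu$. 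Note also that the $\Omega$-marginal of $\bar\eta_\mu^{(k)}$ is not $\mu$ but only $\eps$-close to it, so your joining does not even sit over $\mu$.

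The paper's route avoids the Construction entirely at this stage. One fixes once and for all a symbolic Bernoulli $\Z$-system $(X,\lambda,T)$ together with a Bernoulli $G$-action $\tau$ on the same space having the same orbits (via \cite{7a} and Dye). The orbit cocycle $x\mapsto g_x$ then depends only on $x$, and one forms the skew product $\bar T(x,\omega)=(Tx,g_x\omega)$ on $X\times\Omega$ with the honest product measure $\bar\mu=\lambda\times\mu$; Lemma~\ref{L4} shows $\mu$ is $G$-Bernoulli iff $\bar\mu$ is $\bar T$-relatively Bernoulli over $\lambda$. The remaining obstacle, which your sketch does not address, is that the relative VWB definition carries a universal quantifier $\forall_{k\ge k_0}$ and is therefore not visibly $G_\delta$. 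Lemma~\ref{L5} eliminates this quantifier by adjoining the entropy inequality~\eqref{ee}, which is an open condition precisely because one has already restricted to $\mathcal N_c=\{h(\mu)=c\}$ with the fixed constant $c$; this is where intersecting with $\mathcal N_c$ is essential rather than merely convenient.
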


The idea behind the proof of this theorem is to pass to a $\Z$-action (i.e., action generated by a single transformation) with the same orbits and to use techniques already established for $\Z$-actions.

Let $(X,\lambda,T)$ be an auxiliary symbolic Bernoulli $\Z$-action (for instance, the $(\frac12,\frac12)$-i.i.d.\ process of two symnols). There exists a Bernoulli $G$-action $\tau$ on $(X,\lambda)$ which has the same orbits as $(X,\lambda,T)$ (use \cite[Theorem 6]{7a} and Dye's Theorem \cite{4a}). Let $x\mapsto g_x$ be the (measurable) map that associates with $\lambda$-almost every $x\in X$ an element $g_x\in G$ such that $Tx = g_xx$. Then, for $\nu$-almost every $x\in X$, the map $\vartheta_x:\N\to G$, given by
$$
\vartheta_x(n) = 
\begin{cases}
g_{T^nx}g_{T^{n-1}x}\dots g_{Tx}g_x,& n\ge1,\\
e,& n=0,\\
g^{-1}_{T^{-n}x}g^{-1}_{T^{-n+1}x}\dots g^{-1}_{T^{-1}x},& n\le-1,\\
\end{cases}
$$

\smallskip\noindent
is a bijection and, for every $n\in\Z$, we have $T^nx=\vartheta_x(n)x$.
Let $\mu$ be a measure on $\Omega=\Lambda^G$ \inv\ under the $G$-shift $\sigma$, and consider the direct product of the $G$-shift $(\Omega,\mu,\sigma)$ with the Bernoulli $G$-action $(X,\lambda,\tau)$,  
$$
(X\times\Omega,\lambda\times\mu,\tau\times\sigma).
$$ 
On $X\times\Omega$ define the skew product transformation
$\bar T$ by 
\begin{equation}\label{Tbar}
\bar T(x,\omega)=(T(x),g_x(\omega))=g_x(x,\omega).
\end{equation}
Since $\vartheta_x$ is $\nu$-almost surely a bijection, the systems
$(X\times\Omega,\lambda\times\mu,\tau\times\sigma)$ and 
$(X\times\Omega,\lambda\times\mu,\bar T)$ are orbit equivalent (have the same orbits).

Recall that a skew product is \emph{relatively Bernoulli} with respect to the base if it is isomorphic to a direct product of the base with a Bernoulli system via an isomorphism which preserves the base factor. This  definition applies to actions of any groups that allow reasonable ergodic theory, in particular to countable amenable groups.

For brevity, from now we will denote $\lambda\times\mu$ by $\bar\mu$. 

\begin{lem}\label{L4}A measure
$\mu\in\M$ is Bernoulli under the action of the $G$-shift $\sigma$ if and only if $\bar\mu$ is Bernoulli relative to $\lambda$ under the $\Z$-action generated by $\bar T$. 
\end{lem}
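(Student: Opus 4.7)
The plan is to route both directions through the corresponding statement for the $G$-action $\tau\times\sigma$, and then transfer the relative Bernoulli property across the orbit equivalence between $\tau\times\sigma$ and $\bar T$, which shares the base factor $(X,\lambda)$.

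For the forward direction, I would start by noting that if $\mu$ is Bernoulli for $\sigma$ as a $G$-shift, then $(X\times\Omega,\bar\mu,\tau\times\sigma)$ is literally the direct product of the Bernoulli $G$-systems $(X,\lambda,\tau)$ and $(\Omega,\mu,\sigma)$. Via the identity map, this system is tautologically base-preservingly isomorphic to the product of the base $(X,\lambda,\tau)$ with a Bernoulli $G$-system, so the extension $\tau\times\sigma\to\tau$ is relatively Bernoulli as a $G$-extension. Then, using the characterization of relatively Bernoulli extensions as those satisfying the relative VWB property from \cite{11a}, combined with the observation that this property depends only on the orbit equivalence relation together with the base factor map, I transfer relative Bernoulli from the $G$-action $\tau\times\sigma$ over $\tau$ to the $\Z$-action $\bar T$ over $T$, since the two actions have the same orbits on $X\times\Omega$ and the same factor map to $X$ (the latter using Dye's theorem on $X$ itself, cf.~\eqref{Tbar}).

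For the converse, I would apply the same orbit-equivalence transfer in the opposite direction: if $\bar T$ is Bernoulli relative to $\lambda$, then $\tau\times\sigma$ is Bernoulli relative to $\tau$ as $G$-systems. Thus there exists a base-preserving measure-theoretic isomorphism
\[
(X\times\Omega,\bar\mu,\tau\times\sigma)\;\cong\;(X,\lambda,\tau)\times(Y,\rho,\tau_Y)
\]
with $(Y,\rho,\tau_Y)$ a Bernoulli $G$-system. Comparing disintegrations over the common base $X$, the left-hand side has constant fiber $(\Omega,\mu,\sigma)$, while the right-hand side has constant fiber $(Y,\rho,\tau_Y)$; a base-preserving isomorphism identifies these disintegrations, so $(\Omega,\mu,\sigma)\cong(Y,\rho,\tau_Y)$ as $G$-systems, showing that $\mu$ is Bernoulli under $\sigma$.

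The main obstacle is precisely the orbit-equivalence transfer step, which is the substantive content of the lemma: relative Bernoulli must be shown invariant when passing between $\tau\times\sigma$ and $\bar T$ over their shared base. This is exactly where the machinery of \cite{9a} and the relative VWB characterization of \cite{11a} enter; once it is granted, the rest is tautological on either side — the forward direction because $\tau\times\sigma$ is literally a direct product, and the converse because a base-preserving product decomposition forces the fiber system $(\Omega,\mu,\sigma)$ to be isomorphic to a Bernoulli $G$-system.
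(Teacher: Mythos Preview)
Your forward direction matches the paper's: $\bar\mu$ is relatively Bernoulli over $\lambda$ for the $G$-action because it is literally a direct product, and then one transfers this across the orbit equivalence to $\bar T$. (The paper cites \cite[Lemma~5.2]{1a} for the transfer rather than invoking relative VWB directly, but the content is the same.)

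The converse, however, has a genuine gap. After the orbit-equivalence transfer you obtain a base-preserving isomorphism
\[
\Phi:(X\times\Omega,\lambda\times\mu,\tau\times\sigma)\ \longrightarrow\ (X\times Y,\lambda\times\rho,\tau\times\tau_Y),
\]
and you then assert that ``comparing disintegrations'' forces $(\Omega,\mu,\sigma)\cong(Y,\rho,\tau_Y)$ as $G$-systems. This does not follow. Writing $\Phi(x,\omega)=(x,\phi_x(\omega))$, equivariance under the product $G$-action gives only the cocycle identity
\[
\phi_{gx}\circ\sigma_g \;=\; (\tau_Y)_g\circ\phi_x\qquad\text{for a.e.\ }x,
\]
so each fiber map $\phi_x$ is a measure-space isomorphism $(\Omega,\mu)\to(Y,\rho)$, but no single $\phi_x$ need intertwine the $G$-actions. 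The paper flags exactly this subtlety in a footnote: a base-preserving isomorphism of two direct products over the same base does \emph{not}, in general, imply that the fiber systems are isomorphic.

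The paper avoids the issue entirely: from the base-preserving isomorphism one concludes that $(X\times\Omega,\bar\mu,\tau\times\sigma)$ is isomorphic to a product of two Bernoulli $G$-systems, hence is itself Bernoulli; then $(\Omega,\mu,\sigma)$, being a (nontrivial) factor of a Bernoulli $G$-system, is Bernoulli by the ``factors of Bernoulli are Bernoulli'' theorem for amenable groups. You should replace your disintegration step by this argument.
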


\begin{proof}Obviously, if $\mu$ is Bernoulli under $\sigma$ then $\bar\mu$ is Bernoulli relative to $\lambda$ under $\tau\times\sigma$. Since orbit equivalence preserves the property of being relatively Bernoulli (see~\cite[Lemma~5.2]{1a}), the measure $\bar\mu$ is Bernoulli relative to $\lambda$ also under the $\Z$-action generated by $\bar T$.
Conversely, if $\bar\mu$ is Bernoulli relative to $\lambda$ under the $\Z$-action generated by $\bar T$ then $\bar\mu$ is Bernoulli relative to $\lambda$ under $\tau\times\sigma$. This means that the system $(X\times\Omega,\bar\mu,\tau\times\sigma)$ is isomorphic to a direct product $(X\times Y,\lambda\times\eta,\tau\times\pi)$, where $(Y,\eta,\pi)$ is some Bernoulli $G$-system\footnote{In general, the sole fact that $(X\times\Omega,\lambda\times\mu,\tau\times\sigma)$ is isomorphic to $(X\times Y,\lambda\times\eta,\tau\times\pi)$ does not imply that $(\Omega,\mu,\sigma)$ is isomorphic to $(Y,\eta,\pi)$. However, in this particular case it does, which follows from the rest of the proof and the Ornstein isomorphism theorem for amenable groups (see~\cite{8a}).}. This implies that the system
$(X\times\Omega,\bar\mu,\tau\times\sigma)$ is Bernoulli (because it is isomorphic to a direct product of two Bernoulli systems), and hence $(X,\mu,\sigma)$ is Bernoulli as a nontrivial factor of a Bernoulli system. This ends the proof. 
\end{proof}

Our goal is to show that Bernoulli measures of entropy $c$ form a $G_\delta$-set. By Lemma~\ref{L4} it suffices to show that the set of measures $\mu$ of entropy $c$ such that $\bar\mu$, viewed as a $\bar T$-\im, is Bernoulli relative to $\lambda$, form a $G_\delta$-set. This passage allows us to focus completely on the $\Z$-action given by the skew product transformation~$\bar T$. Our main tools are the notions of the relative very weak Bernoulli (VWB) property and conditional $\eps$-independence of partitions. We will apply a technique (invented by Dan Rudolph) which has been exploited e.g., in \cite{1a,5a,12a}. We will follow more closely \cite{12a}, as it contains the most detailed description of the argument.  

Since $(X,\lambda,T)$ is symbolic, it has a finite clopen generating partition $\P$. Also, $\Lambda$ viewed as a partition of $\Omega$, is clopen. 

\begin{lem}\label{gener}
The partition $\R=\P\times\Lambda$ is clopen and generating in $(X\times\Omega,\bar\mu,\bar T)$, for any $\mu\in\M$. 
\end{lem}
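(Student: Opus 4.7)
The plan is direct: unfold what $\bar T^n(x,\omega)$ looks like in terms of the cocycle $\vartheta_x$, and then exploit separately the generating property of $\P$ for $T$ and the bijectivity of $\vartheta_x$ onto $G$.

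Clopenness is immediate. Each atom of $\Lambda$, viewed as a partition of $\Omega=\Lambda^G$, is the cylinder $[\lambda]=\{\omega:\omega_e=\lambda\}$, which is clopen in the product topology. Since $\P$ is clopen in $X$, the atoms $P\times[\lambda]$ of $\R=\P\times\Lambda$ are clopen rectangles in $X\times\Omega$.

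For the generating property, the key identity is
\[
\bar T^n(x,\omega)=(T^nx,\,\vartheta_x(n)\omega), \qquad n\in\Z,
\]
valid for $\bar\mu$-a.e.\ $(x,\omega)$; this follows by a routine induction from the definition \eqref{Tbar} and the cocycle relation $\vartheta_x(n+1)=g_{T^nx}\vartheta_x(n)$ (with the analogous identity for negative indices). Consequently, $\bar T^n(x,\omega)\in P\times[\lambda]$ if and only if $T^nx\in P$ and $\omega_{\vartheta_x(n)}=\lambda$. Suppose now that two points $(x,\omega)$ and $(x',\omega')$ have the property that $\bar T^n(x,\omega)$ and $\bar T^n(x',\omega')$ lie in the same atom of $\R$ for every $n\in\Z$. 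Projecting onto the first coordinate, $T^nx$ and $T^nx'$ fall into the same atom of $\P$ for every $n$; since $\P$ is generating for $(X,\lambda,T)$, this forces $x=x'$ ($\lambda$-a.s.). Therefore $\vartheta_x=\vartheta_{x'}$, and the remaining condition reads $\omega_{\vartheta_x(n)}=\omega'_{\vartheta_x(n)}$ for all $n\in\Z$. Because $\vartheta_x:\Z\to G$ is a bijection ($\lambda$-a.s., as recorded just before \eqref{Tbar}), we conclude $\omega_h=\omega'_h$ for every $h\in G$, i.e., $\omega=\omega'$. Hence $\R$ separates points $\bar\mu$-a.e.\ under the $\bar T$-action, which is exactly the generating property.

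There is no serious obstacle. The argument is a bookkeeping exercise showing that the $\bar T$-orbit of $(x,\omega)$ visits each $G$-coordinate of $\omega$ exactly once (via $\vartheta_x$) while simultaneously running through the full $T$-orbit of $x$. Both ingredients---the $\lambda$-almost sure bijectivity of $\vartheta_x$ onto $G$ and the generating property of $\P$ for $T$---are already in hand from the excerpt, and the statement holds for every $\mu\in\M$ because $\mu$ enters the argument only through the second-marginal separation by cylinders, which is automatic on $\Lambda^G$.
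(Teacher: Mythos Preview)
Your proof is correct and follows essentially the same route as the paper's: reduce to the first coordinate via the generating property of $\P$ to get $x=x'$ (on a full-measure set), then use the bijectivity of $\vartheta_x$ to read off every coordinate of $\omega$ from the $\Lambda$-part of the $\R$-name. The paper's version is slightly terser (it argues by cases rather than writing out $\bar T^n(x,\omega)=(T^nx,\vartheta_x(n)\omega)$ explicitly), but the logic is identical.
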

\begin{proof}It is obvious that $\R$ is clopen.
We need to show that $\R$ distinguishes the $\bar T$-orbits of $(x,\omega)\neq(x',\omega')$, except for pairs with $x\neq x'$ whose orbits are not distinguished by $\P$ (such pairs have $\bar\mu$-measure zero for any $\mu$). If the orbits of $x, x'$ are distinguished by $\P$, we are done. It remains to consider the case $x=x'$.
Then $\omega\neq\omega'$, while the bijections $\vartheta_x$ and $\vartheta_{x'}$ coincide. There exists $g\in G$ such that $\omega_g\neq\omega'_g$. Let $n=\vartheta_x^{-1}(g)$. Then the $\Omega$-components of $\bar T^n(x,\omega)$ and $\bar T^n(x,\omega')$ have different symbols at $e$, i.e., belong to different elements of the partition $\Lambda$.
\end{proof}

According to \cite{11a}, $\bar\mu$ is Bernoulli relative to $\lambda$ (under the action of $\bar T$) if and only if the partition $\R$ is VWB relative to $\P$. This criterion is applicable only if $\R$ and $\P$ are generating for the respective measures, this is why Lemma~\ref{gener} is crucial. The definition of the relative VWB property depends on the $\bar d$-distance between probability distributions on blocks: 

\begin{defn}\label{dbar}
For two blocks $\alpha,\beta\in\Lambda^n$,  $n\in\N$,
$\alpha=(\alpha_i)_{1\le i\le n},\ \beta=(\beta_i)_{1\le i\le n}$, the \emph{Hamming distance} is defined by
$$
\bar d_n(\alpha,\beta)=\tfrac1n|\{1\le i\le n:\alpha_i\neq\beta_i\}|.
$$
For two probability distributions $P,P'$ on $\Lambda^n$ we let
$$
\bar d_n(P,P') = \inf_\xi\int\bar d_n(\alpha,\beta)\, d\xi(\alpha,\beta),
$$
where $\xi$ ranges over all couplings $P\vee P'$, i.e., probability measures on $\Lambda^n\times\Lambda^n$ whose respective marginals are $P$ and $P'$.
\end{defn}

In the following definition, for $n_1,n_2\in\Z,\ n_1<n_2$, we will abbreviate the join of partitions $\bigvee_{i=n_1}^{n_2}\bar T^{-i}\R$ as $\R^{[n_1,n_2]}$ (a similar convention applies to the partition~$\P$).

\begin{defn}\label{VWB}
The partition $\R$ is very weakly Bernoulli relative to $\P$ (in the skew product system ($X\times\Omega,\bar\mu,\bar T)$) if, for every $\eps>0$, there exist natural numbers $N$ and $k_0$ such that for all $k\ge k_0$ there exists a family $\G\subset\R^{[-k,-1]}\vee\P^{[-k,k]}$ such that
\begin{align}
&\bar\mu(\bigcup\G)>1-\eps,\label{aa}\\
&\bar d_N(\R^{[0,N-1]}|A\cap B, \R^{[0,N-1]}|B)<\eps,\label{bb}\\ 
&\text{whenever }A\in\R^{[-k,-1]},\ B\in\P^{[-k,k]},\ A\cap B\in\G,\notag
\end{align}
where $\R^{[0,N-1]}|E$ stands for the distribution on $\R^{[0,N-1]}$ with respect to the normalized measure $\bar\mu$ restricted to a set $E\subset X\times\Omega$ of positive measure $\bar\mu$. 
\end{defn}

\begin{lem}\label{over}
Definition~\ref{VWB} admits an equivalent formulation in which condition~\eqref{bb} is replaced by
\begin{align}
&\bar d_N(\R^{[0,N-1]}|A\cap B, \R^{[0,N-1]}|A'\cap B)<2\eps,\label{bb1}\\ 
&\text{whenever }A,A'\in\R^{[-k,-1]},\ B\in\P^{[-k,k]},\ A\cap B,A'\cap B\in\G.\notag
\end{align}
\end{lem}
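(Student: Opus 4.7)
The forward implication (bb) $\Rightarrow$ (bb1) follows directly from the triangle inequality: for any $A, A', B$ with $A\cap B, A'\cap B\in\G$, the quantity $\bar d_N(\R^{[0,N-1]}|A\cap B, \R^{[0,N-1]}|A'\cap B)$ is at most the sum of its distances to $\R^{[0,N-1]}|B$ from either side, each less than $\eps$. The content of the lemma is the converse: given the weaker bilateral estimate (bb1), I recover (bb) by identifying the $B$-conditional with the barycenter of the $A\cap B$-conditionals.

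The two key properties of $\bar d_N$, both direct consequences of its coupling-based definition, are: (i) convexity in each argument, meaning $\bar d_N(\sum_i t_i P_i, Q) \le \sum_i t_i \bar d_N(P_i, Q)$ for any convex combination (produced by drawing the index first, then applying the optimal coupling within each component); and (ii) the perturbation bound $\bar d_N((1-\delta)Q + \delta R, Q) \le \delta$ (produced by coupling the $Q$-component to itself and the $R$-component arbitrarily). Property (i) lets me leverage (bb1) on a convex combination of good conditionals, while property (ii) absorbs the contribution of the ``bad'' atoms.

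Concretely, given a target $\eps' > 0$, I apply the hypothesized (bb1) formulation with some small $\eps$ chosen so that both $2\eps + \sqrt{\eps} < \eps'$ and $\eps + \sqrt{\eps} < \eps'$ hold, obtaining $N$, $k_0$, and a family $\G$ with $\bar\mu(\bigcup\G) > 1-\eps$. For each atom $B\in\P^{[-k,k]}$, let $f(B)$ denote the $\bar\mu$-fraction of $B$ not covered by good atoms, i.e., $f(B) = \bar\mu(B\setminus\bigcup\{A : A\cap B\in\G\})/\bar\mu(B)$. Since $\sum_B \bar\mu(B) f(B)$ is exactly the total measure of bad $A\cap B$'s, it is less than $\eps$, and Markov's inequality bounds the $\bar\mu$-measure of atoms $B$ with $f(B) > \sqrt{\eps}$ by $\sqrt{\eps}$. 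Restricting $\G$ to ``bulk-good'' atoms yields a subfamily $\G'$ with $\bar\mu(\bigcup\G') > 1 - \eps - \sqrt{\eps} > 1 - \eps'$, verifying (aa) at level $\eps'$. For any $A\cap B\in\G'$, I decompose
$$\R^{[0,N-1]}|B = w\cdot Q + (1-w)\cdot R,$$
where $w = 1-f(B) \ge 1-\sqrt{\eps}$, $Q$ is the $\bar\mu$-weighted convex combination of $\R^{[0,N-1]}|A'\cap B$ over good $A'$, and $R$ is the analogous combination over bad $A'$. Applying (i) together with (bb1) gives $\bar d_N(Q, \R^{[0,N-1]}|A\cap B) \le 2\eps$, while (ii) yields $\bar d_N(\R^{[0,N-1]}|B, Q) \le 1-w \le \sqrt{\eps}$. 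The triangle inequality then delivers (bb) at level $\eps'$.

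The argument has no conceptual difficulty beyond recognizing the convexity and perturbation properties of $\bar d_N$, which are standard for coupling-based distances. The main obstacle, such as it is, is bookkeeping: one pays a factor of roughly $\sqrt{\eps}$ via Markov when extracting the bulk-good subfamily, so the two formulations are not witnessed by identical parameters but only equivalent in the ``for all $\eps$'' sense.
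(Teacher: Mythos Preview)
Your proof is correct and follows essentially the same route as the paper's: both extract via Markov's inequality a subfamily of ``bulk-good'' atoms $B$ in which all but a $\sqrt{\eps}$-fraction of the $A$'s are good, then decompose $\R^{[0,N-1]}|B$ as a convex combination over the $A'\cap B$ and exploit convexity of $\bar d_N$ together with the trivial bound on the bad part. The only cosmetic difference is that you separate the ``perturbation bound'' (ii) as its own step before applying the triangle inequality, whereas the paper absorbs it into a single convexity estimate; the resulting constants ($2\eps+\sqrt{\eps}$ versus the paper's $\sqrt{\eps}+2\eps$) are identical.
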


\begin{proof}
If~\eqref{bb} holds and $A,A'\in\R^{[-k,-1]},\ B\in\P^{[-k,k]},\ A\cap B,A'\cap B\in\G$, then \eqref{bb1} follows directly from the triangle inequality. Conversely, suppose~\eqref{bb1} holds for a set
$\G\subset\R^{[-k,-1]}\vee\P^{[-k,k]}$ satisfying $\bar\mu(\G)>1-\eps$. Consider the family $\B\subset\P^{[-k,k]}$ such that for each $B\in\B$ the union of the family $\A_B=\{A\in\R^{[-k,-1]}:A\cap B\in\G\}$ has measure at least $1-\sqrt\eps$. Then, as easily seen, $\bar\mu(\bigcup\B)>1-\sqrt\eps$. Let $\G_0 = \{A\cap B:B\in\B,A\in\A_B\}$.
Note that 
\begin{equation}\label{aax}\tag{7a}
\bar\mu(\bigcup\G_0)>1-2\sqrt\eps.
\end{equation}For any $B\in\B$ we have the following  equality for distributions 
\begin{multline*}
\R^{[0,N-1]}|B = \sum_{A'\in\R^{[-k,-1]}}\bar\mu(A')\cdot\R^{[0,N-1]}|A'\cap B=\\
\sum_{A'\in\R^{[-k,-1]}\setminus\A_B}\bar\mu(A')\cdot\R^{[0,N-1]}|A'\cap B +
\sum_{\A_B}\bar\mu(A')\cdot\R^{[0,N-1]}|A'\cap B.
\end{multline*}
By convexity of the metric $\bar d_N$, for any $A\in\A_B$ we get
\begin{equation}\label{aa1}\tag{8a}
\bar d_N(\R^{[0,N-1]}|A\cap B, \R^{[0,N-1]}|B)<\sqrt\eps + 2\eps,
\end{equation}
where $\sqrt\eps$ estimates the contribution of $\bar d_N(\R^{[0,N-1]}|A\cap B, \R^{[0,N-1]}|A'\cap B)$ (majorized by 1) over $A'\in\R^{[-k,-1]}\setminus\A_B$ and $2\eps$ estimates the contribution of the same terms over $A'\in\A_B$. Since in Definition~\ref{VWB} $\eps$ is arbitrary, we can now replace $\G$ by $\G_0$ and $\eps$ by $\max\{2\sqrt\eps,\sqrt\eps+2\eps\}$ to get an equivalent definition. 
\end{proof}

Since the involved partitions are clopen and both conditions~\eqref{aa} and~\eqref{bb1} are defined using sharp inequalities, they are fulfilled on open sets of measures. Alas, the configuration of quantifiers,
$$
\forall_{\eps}\ \exists_{N,k_0}\ \forall_{k\ge k_0}\ \exists_{\mathcal G,\mathcal Q} \ \eqref{aa}\wedge\eqref{bb1}
$$
does not represent a $G_\delta$-set. We need to eliminate the quatifier $\forall_{k\ge k_0}$. This is done using another (standard) distance between distributions and the notion of $\eps$-independence. 

\begin{defn}\label{dist}
Let $P,P'$ be probability distributions on $\Lambda^n$, $n\in\N$. We let
$$
\dist(P,P')=\sum_{\alpha\in\Lambda^n}|P(\alpha)-P'(\alpha)|.
$$
\end{defn}
It is well known that $\bar d_n(P,P')\le\dist(P,P')$.\footnote{In fact, we have $\bar d_n(P,P')\le\frac12\dist(P,P')$, here is why: For any coupling $P\vee P'$, the integral of $\bar d_n$ does not exceed the mass of the off-diagonal part of $\Lambda^n\times\Lambda^n$. There exists a \emph{maximal coupling} for which that mass equals exactly $\frac12\dist(P,P')$ (the existence of such a coupling is usually attributed to Dobrushin). This implies the  inequality in question.}  

\begin{defn}\label{epsind}
Let $\R_0,\R_1,\R_2$ be finite partitions of a probability space $(X,\mu)$. We say that \emph{$\R_0$ is conditionally $\eps$-independent of $\R_2$ given $\R_1$}, if there exists a family $\C$ of intersections $C_1\cap C_2$, $C_1\in\R_1$ and $C_2\in\R_2$, such $\mu(\bigcup\C)>1-\eps$ and for all $C_1\cap C_2\in\C$, we have
$$
\dist(\R_0|C_1\cap C_2,\R_0|C_1)|<\eps.
$$
\end{defn}

It is well known that for each $\eps>0$ there exists $\delta>0$ such that, \begin{gather}
\R_0\underset{^{\R_1}}\perp^{\!\!\delta}\R_2\implies H_\mu(\R_0|\R_1\vee\R_2)>H_\mu(\R_0|\R_1)-\eps,\text{ \ and}\notag\\
H_\mu(\R_0|\R_1\vee\R_2)>H_\mu(\R_0|\R_1)-\delta\implies\R_0\underset{^{\R_1}}\perp^{\!\!\eps}\R_2\label{hind}
\end{gather}
(we will use only~\eqref{hind}). 
Since $\delta$ depends on $\eps$ as well as on the cardinality of $\R$, we will write $\delta(\eps,|\R|)$.

\begin{lem}\label{L5}{\rm(cf.~\cite[Lemma~4]{12a} for the non-relative VWB property)} Consider the skew product system $(X\times\Omega,\bar\mu,\bar T)$ introduced prior to Lemma~\ref{L4}.
Assume that $h(\mu)=c$. The partition $\R=\P\times\Lambda$ is VWB relative to $\P$  if and only if for every $\eps>0$ there exist natural numbers $N$ and $k_0$ and a family $\G_1\subset\R^{[-k_0,-1]}\vee\P^{[-k_0,k_0]}$ such that
\begin{align}
&\bar\mu(\bigcup\G_1)>1-\eps,\label{cc}\\
&\bar d_N(\R^{[0,N-1]}|A_1\cap B_1, \R^{[0,N-1]}|A'_1\cap B_1) <\eps,\label{dd}\\ 
&\text{whenever }A_1,A_1'\in\R^{[-k_0,-1]},\ B_1\in\P^{[-k_0,k_0]},\ A_1\cap B_1, A'_1\cap B_1\in\G_1,\notag\\
&H_{\bar\mu}(\R^{[0,N-1]}|\R^{[-k_0,-1]}\vee\P^{[-k_0,k_0]})<Nc+\delta(\eps,|\R|^N).\label{ee}
\end{align}
\end{lem}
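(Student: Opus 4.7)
The plan is to use the entropy condition \eqref{ee} to upgrade the single-scale information at $k=k_0$ to the all-scales statement of Definition~\ref{VWB}, adapting the template of \cite[Lemma~4]{12a} from the non-relative setting. The key preliminary is the identification of the fiber entropy $h(\bar\mu,\bar T|\mathcal X)=c$, where $\mathcal X$ is the sigma-algebra of the $X$-coordinate. Both $\bar T$ and $\tau\times\sigma$ are free ergodic measure-preserving actions on $(X\times\Omega,\bar\mu)$ sharing the same orbits and the common factor $\mathcal X$, so by the Rudolph--Weiss invariance of conditional entropy under orbit equivalence of amenable group actions, $h(\bar\mu,\bar T|\mathcal X)=h(\bar\mu,\tau\times\sigma|\mathcal X)=h(\mu,\sigma)=c$. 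Since $\P$ generates $\mathcal X$ under $T$ and $\R=\P\times\Lambda$, this yields the monotone decreasing limit
$$
\lim_{k\to\infty}H_{\bar\mu}(\R^{[0,N-1]}|\R^{[-k,-1]}\vee\P^{[-k,k]})=Nc
$$
for every $N$.

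The forward implication is then immediate: given the relative VWB property and $\eps>0$, one chooses $N$ and $k^*$ as in Lemma~\ref{over}, enlarges $k^*$ to some $k_0$ for which \eqref{ee} additionally holds, and takes $\G_1$ to be the family supplied by Lemma~\ref{over} at the scale $k=k_0$. Conditions \eqref{cc} and \eqref{dd} are then merely the instances of \eqref{aa} and \eqref{bb1} at that scale.

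For the converse, fix $\eps>0$, set $\delta=\delta(\eps,|\R|^N)$, and assume $N,k_0,\G_1$ satisfy \eqref{cc}--\eqref{ee}. For any $k\ge k_0$, I would set $\R_1=\R^{[-k_0,-1]}\vee\P^{[-k_0,k_0]}$ and choose $\R_2$ so that $\R_1\vee\R_2=\R^{[-k,-1]}\vee\P^{[-k,k]}$. The limit identity gives $H_{\bar\mu}(\R^{[0,N-1]}|\R_1\vee\R_2)\ge Nc$, while \eqref{ee} gives $H_{\bar\mu}(\R^{[0,N-1]}|\R_1)<Nc+\delta$; therefore \eqref{hind} produces a family $\C$ of atoms of $\R_1\vee\R_2$ with $\bar\mu(\bigcup\C)>1-\eps$ on which $\dist(\R^{[0,N-1]}|A\cap B,\R^{[0,N-1]}|A_1\cap B_1)<\eps$, where $A_1\cap B_1$ denotes the $\R_1$-atom containing $A\cap B$. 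I would then define
$$
\G_k=\{A\cap B\in\R^{[-k,-1]}\vee\P^{[-k,k]}:A\cap B\in\C\text{ and }A_1\cap B_1\in\G_1\},
$$
so that $\bar\mu(\bigcup\G_k)>1-2\eps$. For atoms $A\cap B,\,A'\cap B\in\G_k$ sharing the same $B$, the Dobrushin bound $\bar d_N\le\dist$ converts the two conditional-independence estimates into $\bar d_N$-closeness of $\R^{[0,N-1]}|A\cap B$ and $\R^{[0,N-1]}|A'\cap B$ to $\R^{[0,N-1]}|A_1\cap B_1$ and $\R^{[0,N-1]}|A'_1\cap B_1$, respectively; combined with \eqref{dd} applied to $A_1\cap B_1,\,A'_1\cap B_1\in\G_1$, the triangle inequality yields $\bar d_N(\R^{[0,N-1]}|A\cap B,\R^{[0,N-1]}|A'\cap B)<3\eps$. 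Rescaling $\eps$ then gives \eqref{bb1} for every $k\ge k_0$ and hence the relative VWB property via Lemma~\ref{over}.

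The main difficulty I anticipate is the fiber-entropy identification $h(\bar\mu,\bar T|\mathcal X)=c$, which is what makes \eqref{ee} a genuinely informative hypothesis; the Rudolph--Weiss invariance of conditional entropy under orbit equivalence, applied in this fibered skew-product setting, is the substantive input, while the remainder of the argument is a direct transposition of the non-relative template.
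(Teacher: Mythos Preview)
Your proposal is correct and follows essentially the same route as the paper: you identify the fiber entropy $h(\bar T\mid X)=c$ via invariance of conditional entropy under orbit equivalence (the paper cites Ward--Zhang \cite[Theorem~2.6]{13a} rather than Rudolph--Weiss, but the content is the same), and then in the converse you combine the conditional $\eps$-independence supplied by \eqref{hind} with \eqref{dd} through the triangle inequality to obtain \eqref{bb1} with $3\eps$, exactly as the paper does. The only cosmetic slip is your passing assertion that $\bar\mu$ is ergodic under $\tau\times\sigma$---this is neither assumed of $\mu$ nor needed for the entropy-invariance step.
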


\begin{proof}
Throuoghout the proof, we will abbreviate $H_{\bar\mu}$ as $H$ and $h_{\bar\mu}$ as $h$. If $\R$ is VWB relative to $\P$ then, by~\eqref{aa} and~\eqref{bb1}, the conditions~\eqref{cc} and~\eqref{dd} are fulfilled for all sufficiently large $k_0$. On the other hand, as $k_0$ grows, the conditional entropies
\begin{equation}\label{seq}
H(\R^{[0,N-1]}|\R^{[-k_0,-1]}\vee\P^{[-k_0,k_0]})
\end{equation}
tend non-increasingly to 
$$
h(\bar T^N|X)=Nh(\bar T|X).
$$
By \cite[Theorem~2.6]{13a}, the conditional entropy is preserved under orbit equivalence, and hence $h(\bar T|X)=h(\tau\times\sigma|X)=h_\mu(\sigma)=c$, so the limit of \eqref{seq} equals $Nc$. Thus, for $k_0$ sufficiently large, \eqref{ee} holds as well.

Conversely, fix some $\eps> 0$ and suppose that the conditions~\eqref{cc}, ~\eqref{dd} and~\eqref{ee} are fulfilled for some $N$, $k_0$ and $\G_1$. Note that, by the aforementioned monotone convergence, for any $k\ge k_0$, we have
$$
Nc\le H(\R^{[0,N-1]}|\R^{[-k,-1]}\vee\P^{[-k,k]})\le Nc+\delta(\eps,|\R|^N). 
$$
In particular, we obtain that
$$
H(\R^{[0,N-1]}|\R^{[-k,-1]}\vee\P^{[-k,k]})\ge H(\R^{[0,N-1]}|\R^{[-k_0,-1]}\vee\P^{[-k_0,k_0]})-\delta(\eps,|\R|^N).
$$
Let us abbreviate 
\begin{gather*}
\R_0=\R^{[0,N-1]},\ \ \R_1=\R^{[-k_0,-1]}\vee\P^{[-k_0,k_0]},\text{\ \ and}\\ 
\R_2=\R^{[-k,-k_0-1]}\vee\P^{[-k,-k_0-1]\cup[k_0+1,k]}.
\end{gather*}
(in particular, $\G_1\subset\R_1$).
Note that $\R_1\vee\R_2=\R^{[-k,-1]}\vee\P^{[-k,k]}$.
In this notation, we have obtained
$$
H(\R_0|\R_1\vee\R_2)\ge H(\R_0|\R_1)-\delta(\eps,|\R_0|).
$$
In view of~\eqref{hind}, this means that given $\R_1$, $\R_0$ is conditionally $\eps$-independent of $\R_2$
(see Figure~\ref{fig1}).

\begin{figure}[h]
    \centering
 \includegraphics[width=\linewidth]{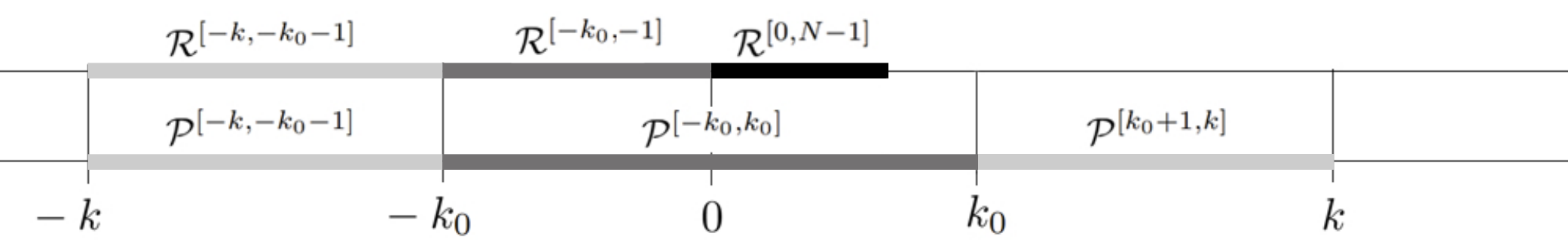}
    \caption{The top and bottom lines represent the evolution of $\R$ and $\P$, respectively. $\R_0$ is shaded black, $\R_1$ is shaded dark gray and $\R_2$ is shaded light gray. The dark gray area ``separates'' the black area from the light gray area, making them conditionally $\eps$-independent (this holds for any $\Z$-action and any two finite partitions $\R,\P$).}
    \label{fig1}
\end{figure}
By Definition~\ref{epsind}, this means that there exists a collection $\C$ of intersetions $C_1\cap C_2$, where $C_1\in\R_1$ and $C_2\in\R_2$, such that $\bar\mu(\bigcup\C)>1-\eps$, and for any $C_1\cap C_2\in\C$, we have
$$
\dist(\R_0|C_1\cap C_2,\R_0|C_1)<\eps,
$$
in particular, 
\begin{equation}\label{c1c2}
\bar d_N(\R_0|C_1\cap C_2,\R_0|C_1)<\eps.
\end{equation}
Let 
\begin{align*}
\G=\{A_1\cap B_1&\cap A_2\cap B_2:\\
&A_1\in\R^{[-k_0,-1]},\\ 
&B_1\in\P^{[-k_0,k_0]},\\ 
&A_2\in\R^{[-k,-k_0-1]},\\ 
&B_2\in\P^{[-k,-k_0-1]\cup[k_0+1,k]},\\
&\ \ \ \ \ A_1\cap B_1\in\G_1,\ \  A_1\cap B_1\cap A_2\cap B_2\in\C\}.
\end{align*}
Then 
$$
\G\subset\R^{[-k,-1]}\cap\P^{[-k,k]}, \ \
\mu(\bigcup\G)>1-2\eps,
$$
which is~\eqref{aa} (with the inessential replacement of $\eps$ by $2\eps$).

Each element of $\G$ has the form
$E=A\cap B$, where $A=A_1\cap A_2\in\R^{[-k,-1]}$, $B=B_1\cap B_2\in\P^{[-k,k]}$.
We can now prove~\eqref{bb1}, as follows. Let $A,A'\in\R^{[-k,-1]}$ and $B\in\P^{[-k,k]}$ be such that $A\cap B,A'\cap B\in\G$. Represent
$A=A_1\cap A_2,A'=A_1'\cap A_2'$ and $B=B_1\cap B_2$, as described above.
Denote $C_1=A_1\cap B_1$, $C'_1=A'_1\cap B_1$, $C_2=A_2\cap B_2$, $C'_2=A'_2\cap B_2$. Then $C_1,C_1'\in\R_1$, $C_2,C_2'\in\R_2$, $C_1\cap C_2\in\C$, $C_1'\cap C_2'\in\C$, and we proceed as follows:
\begin{multline*}
\bar d_N(\R^{[0,N-1]}|A\cap B, \R^{[0,N-1]}|A'\cap B)=\\  
\bar d_N(\R^0|A_1\cap A_2\cap B_1\cap B_2,\R^0|A'_1\cap A'_2\cap B_1\cap B_2)\le
\end{multline*}
\begin{multline*}
\bar d_N(\R^0|A_1\cap A_2\cap B_1\cap B_2,\R^0|A_1\cap B_1)+\\
\bar d_N(\R^0|A_1\cap B_1,\R^0|A'_1\cap B_1)+\\
\bar d_N(\R^0|A'_1\cap B_1,\R^0|A'_1\cap A'_2\cap B_1\cap B_2)=
\end{multline*}
\begin{multline*}
\bar d_N(\R^0|C_1\cap C_2,\R^0|C_1)+\\
\bar d_N(\R^0|A_1\cap B_1,\R^0|A'_1\cap B_1)+\\
\bar d_N(\R^0|C'_1,\R^0|C'_1\cap C'_2)< \eps+\eps+\eps,
\end{multline*}
by~\eqref{c1c2},~\eqref{dd}, and again~\eqref{c1c2}, respectively.  
We have proved~\eqref{bb1} with ($3\eps$ instead of $2\eps$), which ends the proof of the lemma. 
\end{proof}

We are now in a position to finalize the proof of Theorem~\ref{T2}.
\begin{proof}[Proof of Theorem~\ref{T2}] Let $(X,\lambda,T)$ be an auxiliary symbolic Bernoulli $\Z$-action with a clopen generating partition $\P$ and let $\tau$ be a Bernoulli $G$-action on $(X,\lambda)$ which has the same orbits as $(X,\lambda,T)$. 
Let $\bar T$ be the skew product transformation on $X\times\Omega$ defined in~\eqref{Tbar}. By Lemma~\ref{gener}, $\P\times\Lambda$ is a clopen generating partition in the system $(X\times\Omega,\bar\mu,\bar T)$, for any $\mu\in\M$.
Then the set of Bernoulli measures of entropy $c$ in $\M_c$ equals 
$$
\mathcal N_c\cap\bigcap_{n\in\N}\bigcup_{N\in\N}\bigcup_{k_0\in\N}\ \ \bigcup_{\G_1\in\R^{[-k_0,-1]}\vee\P^{[-k_0,k_0]}}\U(n,N,k_0,\G_1),
$$
where $\mathcal N_c=\{\mu\in\M: h(\mu)=c\}$ and $\U(n,N,k_0,\G_1)$ denotes the set of $\mu\in\M$ such that $\bar\mu=\lambda\times\mu$ satisfies the conditions~\eqref{cc},~\eqref{dd} and~\eqref{ee} of Lemma~\ref{L5} with $\G_1$ and $\eps=\frac1n$. Since the entropy function in a symbolic system is upper semicontinuous, $\mathcal N_c$ is of type $G_\delta$ in $\M$. The partitions $\R$ and $\P$ are clopen, and the map $\mu\mapsto\bar\mu$ is a homeomorphism onto its image. Thus, for each $n,N,k_0\in\N$ and $\G_1\in\R^{[-k_0,-1]}\vee\P^{[-k_0,k_0]}$, the set $\U(n,N,k_0,\G_1)$ is open in~$\M$.\footnote{The restriction to $\mathcal N_c$ is crucial. If, in condition~\eqref{ee}, we replace $c$ by $h(\mu)$, the respective set $\U(n,N,k_0,\G_1)$ ceases to be open.}
\end{proof}

\section{Relative Rudolph's theorem}\label{S5}

The proof of Theorem~\ref{Ru1} follows exactly the same scheme as that of Theorem~\ref{Ru}: we need to prove the obvious analogs of 
Theorems~\ref{T1} and~\ref{T2}. We will skip most of the details, instead we indicate the main necessary modifications. 

The analog of Theorem~\ref{T1} states that the set of relatively (with respect to $\mu_1$) Bernoulli measures with relative entropy $c$ is dense in $\M^{\mu_1}_c$. In the paragraph entitled Construction, we take, as before, the product $\bar\Omega\times\Omega$, but now $\eta$ is defined as a lift of $\nu\times\mu_1$. We then consider the distributions $D_{i,\beta}$, $1\le i\le m$, $\beta\in\Lambda_1^{S_i}$, on blocks $\alpha\in\Lambda^{S_i}$ conditioned on the set $B_i\times[\beta]$. The measure $\bar\eta$ is constructed by the same method using conditional independence. Next, we  prove an analog of Lemma~\ref{max}, that $\bar\eta$ has maximal entropy among the lifts of $\nu\times\mu_1$ with the same distributions $D_{i,\beta}$. This will allow us later to deduce that the conditional entropy $h(\bar\eta_{\mu}|\Omega_1)$ is at least $c$. In the proof of an analog of Lemma~\ref{l2}, that the measure $\bar\eta$ is relatively Bernoulli with respect to $\mu_1$, we use a relative version of the fact that any factor of a Bernoulli system is Bernoulli, that can be found as~\cite[Proposition~5, p. 198]{10a}, see also~\cite[Theorem~4.9, p. 1731]{3a}, and which we rewrite in our notation:
 
\begin{thm}\label{danpark}Let $\pi:(X,\mu,\tau)\to(X',\mu',\tau')$ and $\pi':(X',\mu',\tau')\to(X'',\mu'',\tau'')$ be factor maps between 
some measure-preserving $G$-actions. If $(X,\mu,\tau)$ is relatively Bernoulli with respect to $(X'',\mu'',\tau'',)$ then so is $(X',\mu',\tau')$.
\end{thm}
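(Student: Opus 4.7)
The plan is to reduce the statement to the relative very weak Bernoulli (VWB) property that already underlies Section~\ref{S4}. By the amenable group version of the Ornstein--Weiss characterization proved in \cite{11a}, an extension is relatively Bernoulli over its base if and only if any finite partition generating the extension (together with the sigma-algebra pulled back from the base) is relatively VWB with respect to some finite generating partition of the base. The strategy is therefore to produce such a partition for the intermediate extension $X'/X''$ and check its relative VWB property.

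Concretely, I would first select a finite partition $\R_0$ of $X'$ which, together with the pullback of a finite generating partition $\Q$ of $X''$, generates the full sigma-algebra of $X'$. Lifting $\R_0$ to $X$ via $\pi$ and joining with an additional finite partition, one obtains a finite partition $\R$ of $X$ that generates $X$ relative to $\Q$ and refines $\R_0$. The hypothesis that $(X,\mu,\tau)$ is relatively Bernoulli over $(X'',\mu'',\tau'')$ then gives, via the above characterization, that $\R$ is relatively VWB over $\Q$ in $(X,\mu,\tau)$.

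The substantive step is to transfer the VWB estimates from $\R$ down to the coarser partition $\R_0$, so that the amenable analog of Definition~\ref{VWB} holds for $\R_0$ in $(X',\mu',\tau')$. Every atom of the coarse algebra $\R_0^{[K\setminus\{e\}]}$ (for a F\o lner window $K$) decomposes as a disjoint union of atoms of the finer algebra $\R^{[K\setminus\{e\}]}$, and for any base atom $B$, the distribution of $\R_0^{[F_N]}$ on such a coarse atom intersected with $B$ equals the pushforward under the coarsening map $\R\to\R_0$ of a convex combination of the $\R$-conditional distributions on the constituent fine atoms intersected with $B$. Two elementary features of the $\bar d$-metric take care of the rest: $\bar d$ is not increased by coarsening the alphabet, and $\bar d$ is jointly convex in its two arguments.

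The main technical obstacle will be the usual measure-theoretic bookkeeping, since the VWB estimate for $\R$ holds only on a family of atoms of measure greater than $1-\eps$, not on all atoms. Some atoms of the coarse $\R_0$-algebra will contain a non-negligible fraction of ``bad'' fine atoms. A square-root-of-$\eps$ argument, of the same flavor as the transition from~\eqref{bb} to~\eqref{bb1} in Lemma~\ref{over}, shows that the coarse atoms dominated by ``good'' fine atoms themselves form a set of measure arbitrarily close to $1$. On those coarse atoms, convexity of $\bar d$ together with monotonicity under coarsening delivers the VWB bound for $\R_0$ with a mildly enlarged constant; since $\eps$ is arbitrary, the proof is complete.
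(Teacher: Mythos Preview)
The paper does not prove Theorem~\ref{danpark}; it simply quotes it from the literature (\cite[Proposition~5, p.~198]{10a} for $\Z$-actions, \cite[Theorem~4.9]{3a} for countable amenable groups). So there is no argument in the paper to compare against, and your proposal is an attempt to supply one.

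The core mechanism you describe is the right one for $\Z$-actions: if a finite partition $\R$ of $X$ is relatively VWB over the base and $\R$ refines the lift of a partition $\R_0$ of $X'$, then $\R_0$ is relatively VWB as well, because the conditional distribution of $\R_0^{[0,N-1]}$ on a coarse past atom is a convex combination of coarsenings of the $\R^{[0,N-1]}$-distributions on the constituent fine atoms, and $\bar d_N$ is both convex and nonincreasing under alphabet coarsening. The $\sqrt\eps$ bookkeeping for bad atoms is exactly as you say.

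The gap is that you run this argument directly for the $G$-action, invoking an ``amenable group version of the Ornstein--Weiss characterization proved in \cite{11a}'' and an ``amenable analog of Definition~\ref{VWB}'' based on F\o lner windows $K$. Reference~\cite{11a} treats only $\Z$-actions, and the paper stresses in the Introduction that the VWB condition depends essentially on a notion of ``past'' which a general amenable group lacks---this is precisely why Section~\ref{S4} passes to the orbit-equivalent $\Z$-action $\bar T$ rather than working with $G$ directly. Your notation $\R_0^{[K\setminus\{e\}]}$ hints at treating $K\setminus\{e\}$ as a surrogate past, but no such characterization of relative Bernoullicity is stated or cited, and it is not clear what the correct formulation would be. To close the gap you should either (i) first pass to the orbit-equivalent $\Z$-action as in Section~\ref{S4}, carry out your coarsening argument there, and transfer back using that relative Bernoullicity is preserved under orbit equivalence (\cite[Lemma~5.2]{1a}); or (ii) replace VWB by the relatively finitely determined condition of \cite{8a,3a}, which does admit an amenable-group formulation and is likewise inherited by coarser partitions via the same $\bar d$-convexity and monotonicity properties.
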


Let us state the analog of Lemma~\ref{l3} that we will need. The notation is as in Theorem~\ref{Ru1}. The proof is almost identical, with rather obvious modifications.

\begin{lem}\label{l3n}
Let $\mu$ be any free ergodic shift-\im\ on $\Omega$ with conditional entropy $h(\mu|\Omega_1)>0$. For any $\eps>0$ and any $c\in(0,h(\mu|\Omega_1)]$ there exists a measure-theoretic factor map $\phi:\Omega\to\Omega$, which preserves the fibers of $\pi$, and such that $\phi^*(\mu)$ is $3\eps$-close to $\mu$ and $h(\phi^*(\mu)|\Omega_1)=c$.
\end{lem}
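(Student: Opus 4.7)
The plan is to follow the proof of Lemma~\ref{l3} step by step, now arranging everything to commute with the projection $\pi:\Omega\to\Omega_1$. By the Curtis--Hedlund--Lyndon theorem, $\pi$ is a sliding block code with some finite window $W\subset G$; all shapes below will be chosen $(K,\gamma)$-\inv\ with $K\supset W$ and $\gamma$ extremely small, so the $W$-boundary of each shape contributes negligibly to measure and entropy.

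First I produce a free factor $(X,\mu',\tau)$ of $(\Omega,\mu,\sigma)$ that sits above $(\Omega_1,\mu_1)$ and satisfies $h(\mu'|\Omega_1)<c$. This is the join of $(\Omega_1,\mu_1)$ with a tiling factor of $(\Omega,\mu,\sigma)$ whose shapes are taken long enough (in the spirit of Lemma~\ref{l1}) to guarantee that its own entropy is less than $c$; the joint conditional entropy over $\Omega_1$ is then bounded by that small tiling entropy. Applying Theorem~\ref{conley} to $(X,\mu',\tau)$ supplies towers with bases $B_i$ and $(K,\gamma)$-\inv\ shapes $S_i$, lifted to $\Omega$. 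Since $\Omega_1$ is a factor of $X$, I refine each $B_i$ by the finitely many possible patterns $\beta\in\Lambda_1^{S_i}$, obtaining bases $B_{i,\beta}$ on which $\pi(\omega)|_{S_i}\equiv\beta$. For each positive-measure $B_{i,\beta}$ the relative mean ergodic theorem provides a single block $\alpha_{i,\beta}\in\Lambda^{S_i}$ whose empirical distribution on test blocks is $\eps$-close to $\mu$ and whose $\pi$-image coincides with $\beta$ on the $W$-interior of $S_i$.

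Now, for each $t\in[0,1]$, choose nested subsets $B_{i,\beta}^{(t)}\subset B_{i,\beta}$ of measure $t\,\mu(B_{i,\beta})$ and define $\phi_t:\Omega\to\Omega$ by the local rule: if $g^{-1}\omega\in B_{i,\beta}^{(t)}$, overwrite $\omega|_{S_ig}$ with $\alpha_{i,\beta}$ on the $W$-interior of $S_ig$ (keeping the original $\omega$ on the $W$-collar), otherwise leave $\omega|_{S_ig}$ untouched. By construction $\pi\circ\phi_t=\pi$, so $\phi_t$ is fiber-preserving. Steps (a)--(c) of the proof of Lemma~\ref{l3} apply verbatim, giving $\phi_t^*(\mu)$ that is $3\eps$-close to $\mu$. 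The generating partition $\P_t=\phi_t^{-1}(\Lambda)$ is still $d_1$-continuous in $t$, so $t\mapsto h(\phi_t^*(\mu)|\Omega_1)$ is continuous; at $t=0$ it equals $h(\mu|\Omega_1)\ge c$, and at $t=1$ one has $h(\phi_1^*(\mu)|\Omega_1)\le h(\mu'|\Omega_1)+\gamma\log|\Lambda|<c$, the $\gamma\log|\Lambda|$ accounting for the $W$-collar on which $\P_1$ is not yet measurable with respect to $X$. The intermediate value theorem produces $t_0$ with $h(\phi_{t_0}^*(\mu)|\Omega_1)=c$, and $\phi=\phi_{t_0}$ satisfies the assertion.

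The main obstacle I expect is fiber-preservation in the presence of a nontrivial window $W$: a naive cell-by-cell replacement would change $\pi(\omega)$ in a $W$-neighborhood of each tile boundary, breaking the relative constraint. The remedy is the threefold coordination outlined above --- enlarging the shapes so that $K\supset W$ with $\gamma$ small, refining the Rokhlin bases by the observed projection block $\beta$, and restricting the replacement to the $W$-interior while leaving the thin collar intact. All resulting boundary errors are absorbed into the same $O(\gamma)$ book-keeping as the remainder $R$ in the proof of Lemma~\ref{max}.
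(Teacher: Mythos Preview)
Your overall strategy matches what the paper intends (it says only that ``the proof is almost identical, with rather obvious modifications''): refine the tower bases by the $\Omega_1$-block $\beta\in\Lambda_1^{S_i}$, choose a replacement block with prescribed $\pi$-image, and run the intermediate-value argument on $t\mapsto h(\phi_t^*(\mu)|\Omega_1)$. The paper's own relative Construction in Section~\ref{S5} conditions on exactly the same data $\beta$, so your plan is on target.

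There is, however, a genuine gap in the fiber-preservation step. Restricting the overwrite to the $W$-interior $I_i=\{h\in S_i:Wh\subset S_i\}$ does \emph{not} by itself force $\pi\circ\phi_t=\pi$. Take $h$ with $h(g')^{-1}\in I_i$ but $Wh(g')^{-1}\cap(S_i\setminus I_i)\ne\varnothing$ (such $h$ exist whenever $W\ne\{e\}$). Then $Wh\subset S_ig'$, yet $\phi_t(\omega)|_{Wh}$ is a \emph{splice} of $\alpha_{i,\beta}$ (on $Wh\cap I_ig'$) with the original $\omega$ (on $Wh\cap C_ig'$), and the sliding block code applied to this splice need not return $\beta_{h(g')^{-1}}$; your hypothesis on $\alpha_{i,\beta}$ controls only the $\pi$-image computed from $\alpha_{i,\beta}$ in its entirety. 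The same failure occurs at coordinates in a neighboring tile whose $W$-window reaches into a replaced interior. A clean repair is to pass to the smaller interior $I_i=\{h:WW^{-1}h\subset S_i\}$, refine each $B_{i,\beta}$ further by the actual $\Lambda$-pattern $\gamma=\omega|_{S_i\setminus I_i}$ on the collar, take $\alpha_{i,\beta,\gamma}=\omega''|_{S_i}$ for an actual point $\omega''\in B_{i,\beta,\gamma}$, and then overwrite the \emph{entire} tile. The collar is now literally unchanged, and whenever $Wh$ meets a replaced interior one automatically has $Wh\subset S_ig'$, so $\pi(\phi_t(\omega))_h=\pi(\omega'')_{h(g')^{-1}}=\beta_{h(g')^{-1}}=\pi(\omega)_h$. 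This extra refinement is finite and does not disturb your $3\eps$-closeness or the endpoint entropy bound (the collar still contributes at most $O(\gamma)\log|\Lambda|$ to $h(\phi_1^*(\mu)|\Omega_1)$).
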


The final step of the proof of the analog of Theorem~\ref{T1} is now straightforward.
\smallskip

Next, we pass to proving an analog of Theorem~\ref{T2}, i.e., the $G_\delta$ property of the set of relatively Bernoulli measures in $\M^{\mu_1}_c$. Since in the proof of Theorem~\ref{T2} we already deal with relative bernoullicity, the adaptation is quite easy. We start by proving the following analog of Lemma~\ref{L4}. The modifications are rather straightforward. In the last step we use again Theorem~\ref{danpark}. 

\begin{lem}\label{L4n}A measure $\mu\in\M$ is relatively Bernoulli with respect to $\mu_1$ under the action of the $G$-shift $\sigma$ if and only if $\bar\mu=\lambda\times\mu$ is Bernoulli relative to $\lambda\times\mu_1$ under the $\Z$-action generated by $\bar T$. 
\end{lem}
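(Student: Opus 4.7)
The proof will follow the same template as Lemma~\ref{L4}, substituting the enlarged base $\lambda\times\mu_1$ for $\lambda$, with the converse direction requiring one extra step that invokes Theorem~\ref{danpark}.

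For the forward implication, the plan is to start from an isomorphism realizing $(\Omega,\mu,\sigma)$ as a direct product $(\Omega_1\times Y,\mu_1\times\eta,\sigma\times\pi)$ over $(\Omega_1,\mu_1,\sigma)$ with $(Y,\eta,\pi)$ Bernoulli. Taking the Cartesian product of this isomorphism with the identity on $(X,\lambda,\tau)$ immediately presents $(X\times\Omega,\bar\mu,\tau\times\sigma)$ as a direct product over the base $(X\times\Omega_1,\lambda\times\mu_1,\tau\times\sigma)$, with the same Bernoulli fibre $(Y,\eta,\pi)$. Orbit equivalence, via \cite[Lemma~5.2]{1a}, then transports this relative Bernoullicity to the $\Z$-action generated by $\bar T$, yielding the desired conclusion.

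For the converse, the plan is to first apply orbit equivalence in the opposite direction to move the assumed relative Bernoullicity of $\bar\mu$ over $\lambda\times\mu_1$ from $\bar T$ back to $\tau\times\sigma$. This yields an isomorphism
$$
(X\times\Omega,\bar\mu,\tau\times\sigma)\cong(X\times\Omega_1\times Y,\lambda\times\mu_1\times\eta,\tau\times\sigma\times\pi)
$$
over $(X\times\Omega_1,\lambda\times\mu_1)$, with $(Y,\eta,\pi)$ Bernoulli. Next, I would observe that $(X\times\Omega_1,\lambda\times\mu_1,\tau\times\sigma)$ is itself trivially a direct product of the Bernoulli system $(X,\lambda,\tau)$ with $(\Omega_1,\mu_1,\sigma)$, and hence is relatively Bernoulli over $(\Omega_1,\mu_1,\sigma)$. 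Chaining the two direct-product decompositions, with combined Bernoulli fibre $(X\times Y,\lambda\times\eta,\tau\times\pi)$ (which is Bernoulli as a product of Bernoulli $G$-systems), shows that $(X\times\Omega,\bar\mu,\tau\times\sigma)$ is relatively Bernoulli over $(\Omega_1,\mu_1,\sigma)$.

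The final step, and the only genuinely new ingredient compared with Lemma~\ref{L4}, will be to descend this relative Bernoullicity from the skew product to $(\Omega,\mu,\sigma)$ itself. The factorization $(X\times\Omega,\bar\mu,\tau\times\sigma)\to(\Omega,\mu,\sigma)\to(\Omega_1,\mu_1,\sigma)$ exhibits $(\Omega,\mu,\sigma)$ as an intermediate factor, so Theorem~\ref{danpark} applies directly and yields that $(\Omega,\mu,\sigma)$ is relatively Bernoulli over $(\Omega_1,\mu_1,\sigma)$. I expect this descent to be the main obstacle: in the non-relative version of Lemma~\ref{L4} the analogous issue was finessed by invoking the fact that any factor of a Bernoulli system is Bernoulli, whereas in the relative setting one genuinely needs the stronger statement of Theorem~\ref{danpark} to guarantee that an intermediate factor inherits relative Bernoullicity.
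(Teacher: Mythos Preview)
Your proof is correct and follows precisely the route the paper indicates: the forward direction mimics Lemma~\ref{L4} verbatim (product with the identity on $(X,\lambda,\tau)$, then transport via the orbit-equivalence lemma \cite[Lemma~5.2]{1a}), and in the converse direction you first pull relative Bernoullicity back to the $G$-action, then chain the two product decompositions to exhibit $(X\times\Omega,\bar\mu,\tau\times\sigma)$ as relatively Bernoulli over $(\Omega_1,\mu_1,\sigma)$, and finally invoke Theorem~\ref{danpark} on the intermediate factor $(\Omega,\mu,\sigma)$. The paper gives only a sketch, but explicitly flags that ``in the last step we use again Theorem~\ref{danpark}'', which is exactly your descent step; your write-up fills in the details faithfully.
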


From here, the proof of the analog of Theorem~\ref{T2} is nearly identical, we only need to replace bernoullicity relative to $\lambda$ by 
bernoullicity relative to $\lambda\times\mu_1$.

\bigskip


\begin{thebibliography}{F-W}
\bibitem[1]{1a}
Tim Austin, Eli Glasner, Jean-Paul Thouvenot and
Benjamin Weiss, \emph{An ergodic system is dominant exactly when
it has positive entropy}, Ergod.\ Th.\ Dynam.\ Sys.\ {\bf 43} no.\ 10 (2023) pp. 3216--3230.


\bibitem[2]{2a}
Clinton T. Conley, Steve C. Jackson, David Kerr, Andrew S. Marks,
Brandon Seward, AND Robin D. Tucker-Drob, \emph{F\o lner tilings for actions of amenable groups}, Mathematische Annalen, {vol. 371} (2018), 663--683.

\bibitem[3]{3a} 
Alexandre I. Danilenko and Keywon K. Park, \emph{Generators and Bernoullian factors for amenable actions and cocycles on their orbits}, Ergod. Th. \& Dynam. Sys. (2002), {\bf 22}, 1715--1745.

\bibitem[4]{4a} H. Dye, \emph{On groups of measure-preserving transformations I}, Amer. J. Math., {\bf 81} (1959), 119--159.

\bibitem[5]{5a}Eli Glasner, Jean-Paul Thouvenot and Benjamin Weiss, \emph{On some genric classes of ergodic measure preserving transformations}, Trans. Moscow Math. Soc. {\bf 82} (2021), pp. 15--36.

\bibitem[6]{6a}Elon Lindenstrauss, \emph{Pointwise theorems for amenable group actions}, Invent. math. {\bf 146} (2001), pp 259--295.

\bibitem[7]{7a}D.\ S.\ Ornstein and B.\ Weiss, \emph{Ergodic theory of amenable group actions. I: The Rohlin lemma},
Bull. Amer. Math. Soc. (N.S.) {\bf 2} (1980), no. 1, pp. 161--164.

\bibitem[8]{8a} 
Donald S. Ornstein and Benjamin Weiss, \emph{Entropy and isomorphism theorems for actions of amenable groups}, Journal d'Analyse Math\'ematique, Mathematics, {\bf 48} (1987), 1--141.

\bibitem[9]{9a}Daniel J.\ Rudolph and Benjamin Weiss, \emph{Entropy and mixing for amenable group actions}. Ann. of Math. (2)
{\bf 151} (2000), pp. 1119--1150.

\bibitem[10]{10a}Jean-Paul Thouvenot, \emph{Quelques proprietes des systemes dynamiques qui se decomposent en un produit de deux systemes dont l’un est un schema de Bernoulli}, Israel J. Math. {\bf 21} (1975), pp. 177--207.

\bibitem[11]{11a}Jean-Paul Thouvenot, \emph{Remarques sur les systèmes dynamiques donn\'es avec plusieurs facteurs}, Israel J. Math.
{\bf 21} (1975), pp. 215--232.

\bibitem[12]{12a}Jean-Paul Thouvenot and Benjamin Weiss, \emph{On a theorem of Dan Rudolph: Part I}, Israel J. Math. (2025), pp. 1--10.

\bibitem[13]{13a}
Thomas Ward and Qing Zhang, \emph{The Abramov-Rokhlin entropy addition formula for amenable group actions}, Monatshefte Math. {\bf 114} (1992), 317--329.

\end{thebibliography}
\end{document}